\theoremstyle{plain}
\newtheorem{theorem}{Theorem}[section]
\newtheorem*{theorem*}{Theorem}
\newtheorem{proposition}[theorem]{Proposition}
\newtheorem{lemma}[theorem]{Lemma}
\theoremstyle{definition}
\newtheorem{definition}[theorem]{Definition}
\newtheorem{remark}[theorem]{Remark}
\newtheorem{example}[theorem]{Example}
\newcommand{\enm}[1]{\ensuremath{#1}}          %
\newcommand{\op}[1]{\operatorname{#1}}
\newcommand{\cal}[1]{\mathcal{#1}}
\newcommand{\ZZ}{\enm{\mathbb{Z}}}
\newcommand{\PP}{\enm{\mathbb{P}}}
\newcommand{\Bb}{\enm{\cal{B}}}
\newcommand{\Ee}{\enm{\cal{E}}}
\newcommand{\Ff}{\enm{\cal{F}}}
\newcommand{\Gg}{\enm{\cal{G}}}
\newcommand{\Oo}{\enm{\cal{O}}}
\newcommand{\Vv}{\enm{\cal{V}}}
\renewcommand{\phi}{\varphi}
\renewcommand{\theta}{\vartheta}
\renewcommand{\epsilon}{\varepsilon}
\newcommand{\Pic}{\op{Pic}}
\renewcommand{\to}[1][]{\xrightarrow{\ #1\ }}
\newcommand{\old}[1]{}
\begin{document}

\title[Ulrich bundles of arbitrary rank on Segre-Veronese varieties ]{Ulrich bundles of arbitrary rank\\ on Segre-Veronese varieties }

\author{F. Malaspina }

\address{Politecnico di Torino, Corso Duca degli Abruzzi 24, 10129 Torino, Italy}
\email{francesco.malaspina@polito.it}

\keywords{Ulrich bundle, Castelnuovo-Mumford regularity, Segre-Veronese varieties, Beilinson spectral theorem}

\subjclass[2010]{Primary: {14J60}; Secondary: {13C14, 14F05}}

\begin{abstract}

We generalize the results by Eisenbud and Schreyer about
Ulrich bundles over Veronese varieties to Segre-Veronese varieties. We discuss the range where we have natural cohomology and we construct multigraded resolutions and monads for Ulrich bundles of any rank. Moreover we give cohomological characterizations for significant families of bundles.

\end{abstract}

\maketitle


\section{Introduction}

Let  $\PP^N$ be the $N$-dimensional projective space over an
algebraically closed field $k$ of characteristic $0$. If $X\subseteq\PP^N$ is a smooth closed submanifold of dimension $n \geq 1$ we set $\Oo_X(h):=\Oo_{\PP^N}(1)\otimes\Oo_X$.

Among vector bundles
 $\Vv$ on such an $X$, the simplest ones from the cohomological point of view satisfy the vanishing
$$
H^i\big(X,\Vv(th)\big)=0, \qquad \forall\ i=1,\dots,n-1,\ t\in\mathbb Z.
$$
Such vector bundles are called arithmetically Cohen-Macaulay (aCM for
short).

There has been increasing interest on the classification of  aCM bundles on various projective varieties, which is important in a sense that the aCM bundles are considered to give a measurement of complexity of the underlying space. Moreover the aCM bundles are a crucial ingredient for the study of any bundle on $X$ as it is showed in \cite{MRa}.  A special type of aCM bundles, called the Ulrich bundles, are the ones achieving the maximum possible minimal number of generators given by $\deg(X)rank(\Vv)$. These bundles, originally studied for computing Chow forms,
conjecturally exist over any variety (see \cite{ES}) and they
are characterized by the linearity of the minimal graded free resolution
over the polynomial ring of their module of global sections. Many recent papers are devoted to the study of Ulrich bundles over significant varieties (see \cite
{A, ACM, CCHMW, CM1, CM2}).

In \cite{CMP} the authors give families of arbitrary large dimensions over Segre varieties  (except $\PP^1\times\PP^1$).  In \cite{FM} it is shown that every  Ulrich bundle over a rational normal scroll of dimension two is obtained as an extension between two direct sums of line bundles and, in case of quartic scrolls, all the aCM bundles are classified.  In \cite{AHMP} are classified Ulrich vector bundles of arbitrary rank on smooth projective varieties of minimal degree  of any dimension. They are characterized as the bundles admitting a special type of filtration. A consequence of this result is that the moduli spaces of Ulrich bundles are zero-dimensional. The case $\PP^2\times\PP^1$ is very peculiar, in fact there exists only a finite number of aCM bundles which are not Ulrich;  in \cite{FMS} all the aCM bundles are classified. The case of $\PP^2\times\PP^2$ is much more complicated and has been studied in \cite{Mal}. The bigraded resolutions
of Ulrich bundles of arbitrary rank are constructed. Cohomological characterizations of the Ulrich  bundles $\Vv$  with $h^1(\Vv\otimes\Omega_{\PP^2}\boxtimes\Omega_{\PP^2})=0$, or with $h^2(\Vv\otimes\Omega_{\PP^2}(-1)\boxtimes\Omega_{\PP^2}(-1))=0$, or obtained as pullback from $\PP^2$, are proved. The case of Veronese varieties has been studied in \cite{ES}. The authors show that every Ulrich bundle has natural cohomology and can be obtained from a graded resolution.

  The aim of this article is to generalize the cohomological characterizations of Ulrich bundles
from \cite{ES} and \cite{Mal} to Segre Veronese varieties $\Sigma$ obtained by embedding  $\mathbb P^{n_1}\times\dots\times\mathbb P^{n_s}$ with $\Oo(k_1,\dots ,k_s)$ ($s>0, k_1,\dots, k_s\geq 1)$. More precisely we show that every Ulrich bundle $\Vv$ on $\Sigma$ is regular according to both the two different notions of Castelnuovo-Mumford regularity given in \cite{bm2} and \cite{hw}, moreover it has natural cohomology (as in \cite{ES} on Veronese varieties) in a suitable range. Then we compute the cohomology of $\Vv$ tensored with the pullbacks of the $\Omega^i$ bundles  with suitable twists.
  We introduce the following notation:
$$\alpha_i^{a_1,\dots ,a_s}=h^i(\Vv(-ih)\otimes\Omega_{\PP^{n_{1}}}^{a_1}(a_1)\boxtimes\dots\boxtimes\Omega_{\PP^{n_{s}}}^{a_s}(a_s))$$
where $0\leq a_i\leq n_i$ and we show that $$h^k(\Vv(-ih)\otimes\Omega_{\PP^{n_{1}}}^{a_1}(a_1)\boxtimes\dots\boxtimes\Omega_{\PP^{n_{s}}}^{a_s}(a_s))=0$$ if $k\not=i$.

Thanks to a Beilinson type spectral sequence constructed with suitable exceptional collections we prove that, for $q=0,\dots, n_1+\dots +n_s$,
$\Vv(-qh)$ is the homology of the monad

$$0\to\Bb_1\to\oplus_{a_1+\dots +a_s=q}\Oo(-a_1,\dots ,-a_s)^{\oplus \alpha_{q}^{a_1,\dots ,a_s}}\to\Bb_2\to 0$$
where $\Bb_1$ is defined by the exact sequence
$$0\to\Oo(-n_1,\dots ,-n_s)^{\oplus \alpha_{q}^{n_1,\dots ,n_s}}\to\dots\to \oplus_{a_1+\dots +a_s=q+1}\Oo(-a_1,\dots ,-a_s)^{\oplus \alpha_{q}^{a_1,\dots ,a_s}}\to\Bb_1\to 0$$
and $\Bb_2$ is defined by the exact sequence
$$0\to\Bb_2\to \oplus_{a_1+\dots +a_s=q-1}\Oo(-a_1,\dots ,-a_s)^{\oplus \alpha_{q}^{a_1+\dots +a_s}}\to\dots\to\oplus_{a_1+\dots +a_s=1}\Oo(-a_1,\dots ,-a_s)^{\oplus \alpha_{q}^{a_1+\dots +a_s}}\to 0.$$
Notice that for $q=0, q=1$ and $q=n_1+\dots +n_1$ we obtain a multigraded resolution.

Here we summarize the structure of this article.  In section \ref{sec2}  we make the cohomological computations related to an Ulrich bundle and we discuss the Castelnuovo-Mumford regularity and the range where we have natural cohomology. In section \ref{sec3}  we prove the main results for the case of biprojective spaces. In section \ref{sec4} we give cohomological characterizations for significant families of Ulrich bundles. In section \ref{sec5} we deal with the general case.


\





\section{Cohomology of Ulrich bundles on Segre Veronese varieties}\label{sec2}

 Let $\Sigma=\Sigma_{n_1,\dots ,n_s}^{k_1,\dots, k_s}$ be the Segre-Veronese variety obtained by embedding  $X=\mathbb P^{n_1}\times\dots\times\mathbb P^{n_s}$ with $\Oo(k_1,\dots ,k_s)$ ($s>0, k_1,\dots, k_s\geq 1)$. Let $d=n_1+\dots +n_s$ and $h=k_1h_1+\dots +k_sh_s$. We have $$\deg (\Sigma_{n_1,\dots ,n_s}^{k_1,\dots, k_s})=(k_1h_1+\dots +k_sh_s)^s=k_1^{n_1}\dots k_s^{n_s}\deg(\Sigma_{n_1,\dots ,n_s})=k_1^{n_1}\dots k_s^{n_s}\frac{d!}{\prod_{i=1}^{s}(n_i)!}.$$
Moreover we  have $$\Pic (X)\cong \frac{\ZZ\langle h_1,\dots, h_s\rangle}{h_1^{n_1+1},\dots ,h_s^{n_s+1}}$$ and $$\omega_X \cong \Oo_X(-(n_1+1)h_1-\dots -(n_s+1)h_s).$$ We will simply denote $\Oo_X(a_1h_1+\dots +a_sh_s)$ by $\Oo_X(a_1,\dots,a_s)$.

  Let us denote by  ${\displaystyle e^r_j = \left(
\begin{array}{c}
n_r+1 \\
j
\end{array}\right)}$ for $r=1,\dots ,s$.

  We will often use the following exact sequences obtained as pullbacks of Koszul sequences from $\mathbb P^{n_r}$:
  \begin{equation}\label{a1g}0 \to \Oo(0,\dots,-n_r-1,\dots ,0)\to
 \Oo^{e^r_{n_r}}(0,\dots,-n_r,\dots ,0)
\to \cdots \to  \Oo^{e^r_{1}}(0\dots,-1,\dots ,0) \to \Oo\to 0,\end{equation}
  and for $a_r=1,\dots ,n_r-1$, $l=1,\dots, s$

\begin{equation}\label{a3g}0 \to \Oo(0,\dots, a_r-n_r-1, \dots, 0) \to
 \Oo^{e^r_{n_r}}(0,\dots, a_r-n,\dots, 0)
\to \cdots \end{equation}
$$\cdots\to  \Oo^{e^r_{a_r+1}}(0,\dots, -1,\dots ,0) \to \Oo_{\PP^{n_1}}\boxtimes\dots\boxtimes\Omega_{\PP^{n_{i}}}^{a_r}(a_r)\boxtimes\dots\boxtimes\Oo_{\PP^{n_s}}\to 0,$$

\begin{equation}\label{a5g}0 \to \Oo_{\PP^{n_1}}\boxtimes\dots\boxtimes\Omega_{\PP^{n_{r}}}^{a_r}(a_r)\boxtimes\dots\boxtimes\Oo_{\PP^{n_s}} \to
 \Oo^{e^r_{{a_r}}}
\to \cdots\end{equation} $$ \to  \Oo^{e^r_{1}}(0,\dots , a_r-1,\dots, 0) \to \Oo(0,\dots, a_r, \dots, 0)\to 0,$$

\begin{equation}\label{a7g}0 \to \Omega_{\PP^{n_{1}}}^{a_1}(a_1)\boxtimes\dots\boxtimes\Omega_{\PP^{n_{l-1}}}^{a_{l-1}}(a_{l-1})\boxtimes\Oo(a_l-n_l-1, 0, \dots, 0) \to
 \Omega_{\PP^{n_{1}}}^{a_1}(a_1)\boxtimes\dots\boxtimes\Omega_{\PP^{n_{l-1}}}^{a_{l-1}}(a_{l-1})\boxtimes\Oo^{e^l_{n_l}}(a_l-n,0,\dots, 0)
\to \cdots \end{equation}
$$\cdots\to  \Omega_{\PP^{n_{1}}}^{a_1}(a_1)\boxtimes\dots\boxtimes\Omega_{\PP^{n_{l-1}}}^{a_{l-1}}(a_{l-1})\boxtimes\Oo^{e^l_{a_l+1}}(-1,0,\dots ,0) \to \Omega_{\PP^{n_{1}}}^{a_1}(a_1)\boxtimes\dots\boxtimes\Omega_{\PP^{n_{l}}}^{a_{l}}(a_{l})\boxtimes\Oo\to 0,$$

\begin{equation}\label{a9g}0 \to \Omega_{\PP^{n_{1}}}^{a_1}(a_1)\boxtimes\dots\boxtimes\Omega_{\PP^{n_{l}}}^{a_{l}}(a_{l})\boxtimes\Oo \to
 \Omega_{\PP^{n_{1}}}^{a_1}(a_1)\boxtimes\dots\boxtimes\Omega_{\PP^{n_{l-1}}}^{a_{l-1}}(a_{l-1})\boxtimes\Oo^{e^l_{{a_l}}}
\to \cdots\end{equation} $$ \to  \Omega_{\PP^{n_{1}}}^{a_1}(a_1)\boxtimes\dots\boxtimes\Omega_{\PP^{n_{l-1}}}^{a_{l-1}}(a_{l-1})\boxtimes\Oo^{e^l_{1}}(a_l-1,0,\dots, 0) \to \Omega_{\PP^{n_{1}}}^{a_1}(a_1)\boxtimes\dots\boxtimes\Omega_{\PP^{n_{l-1}}}^{a_{l-1}}(a_{l-1})\boxtimes\Oo(a_r,0, \dots, 0)\to 0.$$\\

We will use the following definition of Ulrich bundles (see \cite{ES}):
\begin{definition} A bundle $\Vv$  on $\Sigma$ is Ulrich if $H^i(\Vv(jh))=0$ for any $i$ and $j=-1, \dots ,-d$
\end{definition}

\begin{proposition}
\label{riv}
Let $\Vv$ be an Ulrich bundle on $\Sigma$.
\begin{enumerate}

\item[(i)] For any $i>0$, $H^i(\Vv(-ih)\otimes\Oo(j_1,\dots, j_s))=0$ if $j_1,\dots, j_s\geq 0$.
\item[(ii)] For any $i<d$, $H^i(\Vv(-(i+1)h)\otimes\Oo(j_1,\dots, j_s))=0$ if $j_1,\dots, j_s\leq 0$.
\end{enumerate}
\end{proposition}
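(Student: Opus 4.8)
The plan is to prove both statements simultaneously by induction on $d = n_1 + \dots + n_s$, exploiting the symmetric roles of the two claims under the duality that interchanges "increasing twist'' and "decreasing twist.'' The base case $s = 1$ (so $\Sigma$ is a Veronese variety) is essentially the content of \cite{ES}, or can be derived directly: for $j = (j_1) \geq 0$ the twist $\Oo(j) = \Oo(jh_1)$ sits in a filtration whose graded pieces are among $\Oo, \Oo(h), \dots$, but since $h = k_1 h_1$ one must instead observe that multiplication by a section of $\Oo(h)$ gives $0 \to \Vv(-ih) \to \Vv(-(i-1)h) \to \Vv(-(i-1)h)\otimes \Oo_D \to 0$ and run the long exact sequence; I will address the base case directly from the Ulrich vanishing $H^i(\Vv(jh)) = 0$ for $j = -1, \dots, -d$.

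For the inductive step, I would peel off one factor, say $\PP^{n_s}$, using the Koszul-type resolutions \eqref{a1g}--\eqref{a3g} pulled back from $\PP^{n_s}$. Concretely, to handle $H^i(\Vv(-ih)\otimes \Oo(j_1, \dots, j_s))$ with all $j_r \geq 0$, I would induct on $j_s$: when $j_s = 0$ the bundle is pulled back (in the last factor) and a Leray/Künneth argument together with the corresponding statement on $\Sigma_{n_1,\dots,n_{s-1}}^{k_1,\dots,k_{s-1}}$ reduces the vanishing to the lower-dimensional case; when $j_s > 0$, the sequence \eqref{a5g} (with $a_s$ replaced appropriately, or rather the twist of \eqref{a1g} by $\Oo(0,\dots,0,j_s)$) expresses $\Oo(0,\dots,0,j_s)$ as the cokernel of a complex of bundles $\Oo(0,\dots,0,j_s - m)$ with $1 \leq m \leq n_s + 1$ and multiplicities $e^s_m$; tensoring with $\Vv(-ih)$ and chasing the resulting hypercohomology spectral sequence expresses the desired $H^i$ in terms of $H^{i'}(\Vv(-ih)\otimes \Oo(0,\dots,0,j_s - m))$ for various $i' \geq i$. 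For $j_s - m \geq 0$ these vanish by the $j_s$-induction; the problematic terms are those with $j_s - m < 0$, i.e. $m > j_s$, where one needs $H^{i'}(\Vv(-ih)\otimes \Oo(0,\dots,0,-c))$ with $c \geq 1$ to vanish — and here is exactly where statement (ii) (applied to the lower cohomological degree, after a further twist by $h$ in the last factor's contribution) must be fed in. Thus (i) and (ii) genuinely support each other, and I would set up the induction so that both are proved together.

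The main obstacle will be bookkeeping the bidegree shifts: the Koszul resolution \eqref{a1g} twisted by $\Oo(0,\dots,0,j_s)$ produces terms whose last coordinate ranges over $j_s, j_s - 1, \dots, j_s - n_s - 1$, and only the extreme term $j_s - n_s - 1$ is "deep'' enough to require the complementary statement, while the intermediate ones with $j_s - m < 0$ but $\geq -n_s$ need a separate small argument — likely a direct Künneth computation using $H^*(\PP^{n_s}, \Oo(-c)) = 0$ for $0 < c < n_s + 1$ combined with the Ulrich vanishing range $j = -1,\dots,-d$, noting that the shift from $\Vv(-ih)$ to an effective twist on the $\PP^{n_s}$-factor only costs $k_s \leq \dots$ in the other coordinates but nothing that leaves the Ulrich window once $i$ is in the right range. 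I expect the cohomological degree arithmetic — tracking which $H^{i'}$ appears on each page of the spectral sequence and confirming $i' \leq d$ or $i' > 0$ as needed so that an inductive or base-case vanishing applies — to be the delicate point, but no single step should require more than a diagram chase once the induction hypothesis is correctly formulated to include both (i) and (ii) over all smaller $\Sigma$.
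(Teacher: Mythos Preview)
Your inductive scheme has a real gap. The step ``when $j_s=0$ the bundle is pulled back (in the last factor) and a Leray/K\"unneth argument together with the corresponding statement on $\Sigma_{n_1,\dots,n_{s-1}}^{k_1,\dots,k_{s-1}}$ reduces the vanishing to the lower-dimensional case'' does not work: $\Vv$ is an arbitrary Ulrich bundle on the full product, not a box product, so $\Vv(-ih)\otimes\Oo(j_1,\dots,j_{s-1},0)$ is \emph{not} pulled back from the smaller product, and there is no Ulrich bundle on $\Sigma_{n_1,\dots,n_{s-1}}^{k_1,\dots,k_{s-1}}$ to which your induction hypothesis could apply. The Leray spectral sequence for the projection $\pi\colon X\to\PP^{n_1}\times\cdots\times\PP^{n_{s-1}}$ involves the higher direct images $R^q\pi_*\Vv$, about which the Ulrich hypothesis tells you nothing directly. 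So the induction on $d$ cannot get off the ground.

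The paper's argument stays on $\Sigma$ throughout and instead inducts on the cohomological index $i$: downward from $i=d$ for (i), upward from $i=0$ for (ii), each proved \emph{independently} (no mutual support is used). The key observation you are missing is purely arithmetic: since $h=k_1h_1+\cdots+k_sh_s$ with every $k_r\ge 1$, one has
\[
\Vv\bigl(-(i-1)h\bigr)\otimes\Oo(0,\dots,-k,\dots,0)\;=\;\Vv\bigl(-(i-1+k)h\bigr)\otimes\Oo\bigl(kk_1,\dots,k(k_r-1),\dots,kk_s\bigr),
\]
and the twist on the right is nonnegative in every coordinate. Thus when you tensor the Koszul sequence \eqref{a1g} by $\Vv(-(i-1)h)\otimes\Oo(0,\dots,1,\dots,0)$ and chase, every term with a negative $r$-th coordinate is of the form $H^{i-1+k}\bigl(\Vv(-(i-1+k)h)\otimes(\text{nonnegative twist})\bigr)$ with $k\ge 1$, and these vanish by the induction hypothesis at the deeper index $i-1+k\ge i$. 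This single trick handles the whole inductive step; bootstrapping from $\Oo(0,\dots,1,\dots,0)$ to arbitrary nonnegative $(j_1,\dots,j_s)$ is the same chase repeated. The base case $i=d$ for (i) (respectively $i=0$ for (ii)) uses only that $H^d$ (respectively $H^0$) cannot increase under a globally generated (respectively anti-effective) twist, together with the single Ulrich vanishing $H^d(\Vv(-dh))=0$ (respectively $H^0(\Vv(-h))=0$).
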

\begin{proof}
$H^i(\Vv(jh))=0$ for any $i$ and $j=-1, \dots ,-d$ since $\Vv$ is Ulrich.

Since $H^d(\Vv(-dh))=0$,  $H^d(\Vv(-dh)\otimes\Oo(j_1,\dots, j_s))=0$ if $j_1,\dots, j_s\geq 0$.\\

Let assume $(i)$ for any index $i\geq i_0>1$. We prove $(a)$ for $i_0-1$.\\

   From (\ref{a1g}) tensored by $\Vv(-(i_0-1)h)\otimes\Oo(0,\dots,1,\dots, 0)$  since for any $k \geq 0$ $$H^{i_0-1+k}(\Vv(-(i_0-1)h)\otimes\Oo(0,\dots,-k,\dots, 0))=H^{i_0-1}(\Vv(-(i_0-1)h))=0,$$ we get $$H^{i_0-1}(\Vv(-(i_0-1)h)\otimes\Oo(0,\dots,1,\dots, 0))=0.$$

   From (\ref{a1g}) tensored by $\Vv(-(i_0-1)h)\otimes\Oo(j_1,\dots,j_r+1,\dots, j_s)$ with $j_1, \dots, j_s\geq 0$  since for any $k \geq 0$ $$H^{i_0-1+k}(\Vv(-(i_0-1)h)\otimes\Oo(j_1,\dots,j_r-k,\dots, j_s))=0,$$ we get $$H^{i_0-1}(H^i(\Vv(-(i_0-1)h)\otimes\Oo(j_1,\dots, j_s))=0$$ if $j_1,\dots, j_s\geq 0$. So $(i)$ is proved.\\

Since $H^0(\Vv(-h))=0$,  $H^0(\Vv(-h)\otimes\Oo(j_1,\dots, j_s))=0$ if $j_1,\dots, j_s\leq 0$.\\

Let assume $(ii)$ for an index $i\leq i_0<d-1$. We prove $(ii)$ for $i_0+1$.\\

 From (\ref{a5g}) tensored by $\Vv(-(i_0+2)h)\otimes\Oo(0,\dots,n_r,\dots, 0)$  since for any $k \geq 0$ $$H^{i_0+1-k}(\Vv(-(i_0+2)h)\otimes\Oo(0,\dots,k,\dots, 0))=H^{i_0+1}(\Vv(-(i_0+2)h)\otimes\Oo)=0,$$ we get $$H^{i_0+1}(\Vv(-(i_0+2)h)\otimes\Oo(0,\dots,-1,\dots, 0))=0.$$

   From (\ref{a5g}) tensored by $\Vv(-(i_0+2)h)\otimes\Oo(j_1,\dots,n_r+j_r,\dots, j_s)$ with $j_1, \dots, j_s\leq 0$, since for any $k \geq 0$ $$H^{i_0+1-k}(\Vv(-(i_0+2)h)\otimes\Oo(j_1,\dots,j_l+k,\dots, j_s))=0,$$ we get $$H^{i_0+1}(\Vv(-(i_0+2)h)\otimes\Oo(j_1,\dots, j_s))=0$$ when $j_1,\dots, j_s\leq 0$ and $(ii)$ is proved.

\end{proof}

\begin{remark}In particular an Ulrich bundle on $\Sigma$ satisfies the two different notions of Castelnuovo-Mumford regularity
given in \cite{bm2} and \cite{hw}.
\end{remark}

\begin{proposition}\label{riv2g}
Let $\Vv$ be an Ulrich bundle on $\Sigma$. Let $a_r, l, r$ be integers $ 0\leq a_r\leq n_r$ and $l,r=1,\dots ,s$.
\begin{enumerate}

\item[(a)] For any $i>0$, $H^i(\Vv(-ih)\otimes\Oo_{\PP^{n_1}}(j_1)\boxtimes\dots\boxtimes\Omega_{\PP^{n_{r}}}^{a_r}(a_r+1+j_r)\dots\boxtimes\Oo_{\PP^{n_s}}(j_s))=0$ if $j_1, \dots, j_s\geq 0$.
\item[(b)] For any $i<d$, $H^i(\Vv(-(i+1)h)\otimes\Oo_{\PP^{n_1}}(j_1)\boxtimes\dots\boxtimes\Omega_{\PP^{n_{r}}}^{a_r}(a_r+j_r)\dots\boxtimes\Oo_{\PP^{n_s}}(j_s))=0$ if $j_1, \dots, j_s\leq 0$.
    \item[(c)] For any $i>0$, $H^i(\Vv(-ih)\otimes\Omega^{a_1}(a_1+1+j_1)\boxtimes\dots\boxtimes\Omega^{a_l}(a_l+1+j_l)\boxtimes\Oo(j_{l+1},\dots , j_s))=0$ if $j_1, \dots, j_s\geq 0$.
    \item[(d)] For any $i<d$, $H^i(\Vv(-(i+1)h)\otimes\Omega^{a_1}(a_1+j_1)\boxtimes\dots\boxtimes\Omega^{a_l}(a_l+j_l)\boxtimes\Oo(j_{l+1},\dots , j_s))=0$ if $j_1, \dots, j_s\leq 0$.
\end{enumerate}
\end{proposition}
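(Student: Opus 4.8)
The plan is to prove all four parts simultaneously by induction on $i$, closely mirroring the proof of Proposition \ref{riv}, but now feeding the vanishing of Proposition \ref{riv} into the Koszul-type resolutions (\ref{a3g}), (\ref{a5g}), (\ref{a7g}), (\ref{a9g}) one factor of the product at a time. The base cases come from Proposition \ref{riv}: for part (a) with $i=d$ we have $H^d(\Vv(-dh)\otimes\Oo(j_1,\dots,j_s))=0$ for $j_\ell\ge 0$, and we must bootstrap from the line-bundle case $a_r=0$ (which is exactly Proposition \ref{riv}) to general $a_r$; for part (b) with $i=0$ the statement $H^0(\Vv(-h)\otimes\Oo(j_1,\dots,j_s))=0$ for $j_\ell\le 0$ again comes from Proposition \ref{riv}.

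For part (a), I would fix $r$ and argue downward on $i$ from $i=d$. Assuming (a) holds for all indices $\ge i_0$, and assuming Proposition \ref{riv}(i), I tensor the sequence (\ref{a3g}) (the one resolving $\Oo_{\PP^{n_1}}\boxtimes\dots\boxtimes\Omega_{\PP^{n_r}}^{a_r}(a_r)\boxtimes\dots\boxtimes\Oo_{\PP^{n_s}}$ by line bundles twisted only in the $r$-th factor) by $\Vv(-(i_0-1)h)\otimes\Oo(j_1,\dots,j_r+1,\dots,j_s)$ with all $j_\ell\ge 0$. The terms appearing to the left are line-bundle twists of $\Vv(-(i_0-1)h)$ with the $r$-th coordinate decreased; by Proposition \ref{riv}(i) (or rather by the strengthened statement for the relevant cohomological degrees, obtained by the same Koszul argument as in Proposition \ref{riv}) these have vanishing $H^{i_0-1+k}$ for every $k\ge 0$, so chasing the long exact sequence yields $H^{i_0-1}$ of the twisted $\Omega^{a_r}$ term is zero. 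Part (b) is the mirror image: assuming (b) for indices $\le i_0$ and using Proposition \ref{riv}(ii), tensor (\ref{a5g}) by $\Vv(-(i_0+2)h)\otimes\Oo(j_1,\dots,n_r+j_r,\dots,j_s)$ with $j_\ell\le 0$ and run the long exact sequence the other way, exactly as in the second half of the proof of Proposition \ref{riv}.

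For parts (c) and (d) I would then induct on the number $l$ of tensor factors carrying an $\Omega^{a_\ell}$, using the sequences (\ref{a7g}) and (\ref{a9g}) which peel off one such factor at a time while keeping the earlier $\Omega^{a_1}(a_1),\dots,\Omega^{a_{l-1}}(a_{l-1})$ factors along for the ride. The case $l=1$ is precisely parts (a) and (b). For the inductive step in (c): tensor (\ref{a7g}) by $\Vv(-ih)\otimes(\text{something})$ so that the right-hand term becomes $\Vv(-ih)\otimes\Omega^{a_1}(a_1+1+j_1)\boxtimes\dots\boxtimes\Omega^{a_l}(a_l+1+j_l)\boxtimes\Oo(j_{l+1},\dots,j_s)$; the intermediate terms are twists of $\Omega^{a_1}(a_1)\boxtimes\dots\boxtimes\Omega^{a_{l-1}}(a_{l-1})$ tensored with line bundles in the $l$-th slot with nonnegative twist, which vanish in the appropriate cohomological degrees by the inductive hypothesis on $l-1$ (together with the $i$-induction as in (a)). Again (d) is the dual argument with (\ref{a9g}).

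The main obstacle, as usual with these spectral-type arguments, is bookkeeping: one must verify that in each Koszul resolution the line-bundle twists appearing in the intermediate terms land in exactly the cohomological range where the already-established vanishing applies — that is, that decreasing the $r$-th (resp.\ $l$-th) coordinate by $k$ is compensated by looking at $H^{i-1+k}$ (resp.\ $H^{i+1-k}$), so that the hypotheses of Proposition \ref{riv} and of the lower-$l$ cases are genuinely met for all $k\ge 0$. One should also be careful that the inductions on $i$ and on $l$ are nested in a consistent order (fix $l$, induct on $i$ downward for (a)/(c) and upward for (b)/(d), then increment $l$), and that the ``suitable twists'' $j_\ell\ge 0$ versus $j_\ell\le 0$ are preserved under passing to the subfactor sequences. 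Once the indexing is set up correctly, each step is a routine long-exact-sequence chase identical in spirit to the proof of Proposition \ref{riv}.
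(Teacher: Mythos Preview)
Your proposal is correct and follows essentially the same route as the paper's proof: parts (a) and (b) are obtained by descending (resp.\ ascending) induction on $i$ using the sequences (\ref{a3g}) and (\ref{a5g}) with Proposition \ref{riv} providing the line-bundle input, and parts (c) and (d) are then obtained by induction on $l$ via (\ref{a7g}) and (\ref{a9g}), the case $l=1$ being exactly (a) and (b). The paper organizes the induction for (c) and (d) as a pure induction on $l$ (for all $i$ at once) rather than nesting it with $i$, but this is only a cosmetic difference in presentation.
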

\begin{proof}

$(a)$ Since $H^d(\Vv(-dh))=0$,  from (\ref{a3g}) tensored by $\Vv(-dh)\otimes\Oo(0,\dots,1,\dots ,0)$  we get $$H^d(\Vv(-dh)\otimes\Oo_{\PP^{n_1}}\boxtimes\dots\boxtimes\Omega_{\PP^{n_{i}}}^{a_r}(a_r+1)\boxtimes\dots\boxtimes\Oo_{\PP^{n_s}})=0.$$   By a recursive argument, since $$H^d(\Vv(-dh)\otimes\Oo(j_1,\dots, j_s))=0$$ for $j_1,\dots, j_s\geq 0$,  we get $$H^d(\Vv(-dh)\otimes\Oo_{\PP^{n_1}}(j_1)\boxtimes\dots\boxtimes\Omega_{\PP^{n_{r}}}^{a_r}(a_r+1+j_r)\boxtimes\dots\boxtimes\Oo_{\PP^{n_s}}(j_s))=0$$ when $j_1, \dots, j_s\geq 0$.\\

   Let assume $(a)$ for any index $i\geq i_0>1$. We prove $(a)$ for $i_0-1$.\\

   From (\ref{a3g}) tensored by $\Vv(-(i_0-1)h)\otimes\Oo(0,\dots,1,\dots, 0)$,  since for any $k \geq 0$ $$H^{i_0-1+k}(\Vv(-(i_0-1)h)\otimes\Oo(0,\dots,-k,\dots, 0))=H^{i_0-1}(\Vv(-(i_0-1)h))=0,$$ we get $$H^{i_0-1}(\Vv(-(i_0-1)h)\otimes\Oo_{\PP^m}\boxtimes\dots\boxtimes\Omega^{a_r}(a_r+1)\boxtimes\dots\boxtimes\Oo_{\PP^m})=0.$$

   From (\ref{a3g}) tensored by $\Vv(-(i_0-1)h)\otimes\Oo(j_1,\dots,j_l+1,\dots, j_s)$ with $j_1, \dots, j_s\geq 0$,  since for any $k \geq 0$ $$H^{i_0-1+k}(\Vv(-(i_0-1)h)\otimes\Oo(j_1,\dots,j_l-k,\dots, j_s))=0,$$ we get $$H^{i_0-1}(\Vv(-(i_0-1)h)\otimes\Oo_{\PP^m}(j_1)\boxtimes\dots\boxtimes\Omega^{a_r}(a_r+1-j_l)\boxtimes\dots\boxtimes\Oo_{\PP^m}(j_s))=0.$$

$(b)$ Since $H^0(\Vv(-h))=0$,  from (\ref{a5g})  tensored by
 $\Vv(-h)$  we get $$H^0(\Vv(-h)\otimes\Oo_{\PP^{n_1}}\boxtimes\dots\boxtimes\Omega_{\PP^{n_{i}}}^{a_r}(a_r)\boxtimes\dots\boxtimes\Oo_{\PP^{n_s}})=0.$$   By a recursive argument since $$H^d(\Vv(-h)\otimes\Oo(j_1,\dots, j_s))=0$$ for $j_1,\dots, j_s\leq 0$  we get $$H^0(\Vv(-dh)\otimes\Oo_{\PP^{n_1}}(j_1)\boxtimes\dots\boxtimes\Omega_{\PP^{n_{r}}}^{a_r}(a_r+j_r)\boxtimes\dots\boxtimes\Oo_{\PP^{n_s}}(j_s))=0$$ when $j_1, \dots, j_s\leq 0$.\\

Let assume $(b)$ for an index $i\leq i_0<d-1$. We prove $(b)$ for $i_0+1$.\\

 From (\ref{a5g}) tensored by $\Vv(-(i_0+2)h)$  since for any $k \geq 0$ $$H^{i_0+1-k}(\Vv(-(i_0+2)h)\otimes\Oo(0,\dots,k,\dots, 0))=H^{i_0+1}(\Vv(-(i_0+2)h))=0,$$ we get $$H^{i_0+1}(\Vv(-(i_0+2)h)\otimes\Oo_{\PP^m}\boxtimes\dots\boxtimes\Omega^{a_r}(a_r)\boxtimes\dots\boxtimes\Oo_{\PP^m})=0.$$

   From (\ref{a5g}) tensored by $\Vv(-(i_0+2)h)\otimes\Oo(j_1,\dots,j_l,\dots, j_s)$ with $j_1, \dots, j_s\leq 0$  since for any $k \geq 0$ $$H^{i_0+1-k}(\Vv(-(i_0+2)h)\otimes\Oo(j_1,\dots,j_l+k,\dots, j_s))=0,$$ we get $$H^{i_0+1}(\Vv(-(i_0+2)h)\otimes\Oo_{\PP^m}(j_1)\boxtimes\dots\boxtimes\Omega^{a_r}(a_r+j_l)\boxtimes\dots\boxtimes\Oo_{\PP^m}(j_s))=0.$$

$(c)$ The case $l=1$ is proved in $(a)$. Let assume $(c)$ for $l-1$ with  $2\leq l\leq s$. We prove $(c)$ for $l$.\\

   From (\ref{a7g}) tensored by $\Vv(-ih)\otimes\Oo(1,\dots ,1,1,0,\dots ,0)$  since for any $k \geq 0$ $$H^{i+k}(\Vv(-ih)\otimes\Omega_{\PP^{n_{1}}}^{a_1}(a_1+1)\boxtimes\dots\boxtimes\Omega_{\PP^{n_{l-1}}}^{a_{l-1}}(a_{l-1}+1)\boxtimes\Oo(-k,0,\dots , 0))=0,$$ we get $$H^{i}(\Vv(-ih)\otimes\Omega^{a_1}(a_1+1)\boxtimes\dots\boxtimes\Omega^{a_l}(a_l+1)\boxtimes\Oo)=0.$$

   From (\ref{a7g}) tensored by $\Vv(-ih)\otimes\Oo(j_1+1,\dots,j_l+1,j_{l+1}\dots, j_s)$ with $j_1, \dots, j_s\geq 0$  since for any $k \geq 0$ $$H^{i+k}(\Vv(-ih)\otimes\Omega_{\PP^{n_{1}}}^{a_1}(a_1+j_1+1)\boxtimes\dots\boxtimes\Omega_{\PP^{n_{l-1}}}^{a_{l-1}}(a_{l-1}+j_{l-1}+1)\boxtimes\Oo(j_l-k,j_{l+1},\dots ,j_s))=0,$$ we get $$H^i(\Vv(-ih)\otimes\Omega^{a_1}(a_1+1+j_1)\boxtimes\dots\boxtimes\Omega^{a_l}(a_l+1+j_l)\boxtimes\Oo(j_{l+1},\dots ,j_s))=0.$$

$(d)$ The case $l=1$ is proved in $(b)$. Let assume $(d)$ for $l-1$ with  $2\leq l\leq s$. We prove $(d)$ for $l$.\\

   From (\ref{a9g}) tensored by $\Vv(-(i+1)h)$  since for any $k \geq 0$ $$H^{i-k}(\Vv(-(i+1)h)\otimes\Omega_{\PP^{n_{1}}}^{a_1}(a_1)\boxtimes\dots\boxtimes\Omega_{\PP^{n_{l-1}}}^{a_{l-1}}(a_{l-1})\boxtimes\Oo(-k,0,\dots , 0))=0,$$ we get $$H^{i}(\Vv(-(i+1)h)\otimes\Omega^{a_1}(a_1)\boxtimes\dots\boxtimes\Omega^{a_l}(a_l)\boxtimes\Oo))=0.$$

   From (\ref{a9g}) tensored by $\Vv(-(i+1)h)\otimes\Oo(j_1,\dots,j_l,\dots, j_s)$ with $j_1, \dots, j_s\leq 0$ $$H^{i-k}(\Vv(-(i+1)h)\otimes\Omega_{\PP^{n_{1}}}^{a_1}(a_1+j_1)\boxtimes\dots\boxtimes\Omega_{\PP^{n_{l-1}}}^{a_{l-1}}(a_{l-1}+j_{l-1})\boxtimes\Oo(j_l-k,j_{l+1},\dots , j_s))=0,$$ we get $$H^i(\Vv(-ih)\otimes\Omega^{a_1}(a_1+j_1)\boxtimes\dots\boxtimes\Omega^{a_l}(a_l+j_l)\boxtimes\Oo(j_{l+1},\dots ,j_s))=0.$$

\end{proof}

\begin{remark}\label{perm} The above Proposition holds also up to a permutation of the factors $\mathbb P^{n_1},\dots,\mathbb P^{n_s}$.
\end{remark}

\begin{definition} Let $\Vv$ be an Ulrich bundle on $\Sigma$. Let $a_i=0,\dots, n_i$.  We introduce the following notation:

\begin{equation}\label{alphg}\alpha_i^{a_1,\dots , a_s}=h^i(\Vv(-ih)\otimes\Omega_{\PP^{n_{1}}}^{a_1}(a_i)\boxtimes\dots\boxtimes\Omega_{\PP^{n_{s}}}^{a_s}(a_s))\end{equation}
Notice that $\Omega_{\PP^{n}}^0=\Oo_{\PP^{n}}$ and $\Omega^n_{\PP^{n}}(n)=\Oo_{\PP^{n}}(-1)$
\end{definition}

\begin{remark}\label{suntog} Let $\Vv$ be an Ulrich bundle on $\Sigma$. Let $a_i=0,\dots, n_i$ and $i=0,\dots , d$.  By Proposition \ref{riv2g} we get

$$h^k(\Vv(-ih)\otimes\Omega^{a_1}(a_i)\boxtimes\dots\boxtimes\Omega^{a_s}(a_s))=0$$ for any $k\not= i$.
\end{remark}

\section{Resolutions and Monads for the case $s=2$}\label{sec3}

Let $X=\PP^m\times\PP^n$, let assume $m\geq n$, we  have $\Pic (X)\cong \frac{\ZZ\langle h_1,h_2\rangle}{h_1^{m+1}h_2^{n+1}}$ and $\omega_X \cong \Oo_X(-(n+1)h_1-(m+1)h_2)$. We will simply denote $\Oo_\Sigma(ah_1+bh_2)$ by $\Oo(a,b)$ and $\Omega_{\PP^m}^a(a)\boxtimes\Oo_{\PP^n}, \Oo_{\PP^m}\boxtimes\Omega_{\PP^n}^b(b)$ by $\Omega^a(a)\boxtimes\Oo, \Oo\boxtimes\Omega^b(b)$.\\
  Let $\Sigma=\Sigma_{m,n}^{k, l}$ be the Segre-Veronese variety obtained by embedding  $\mathbb P^{m}\times\mathbb P^{n}$ with $\Oo(k_1,k_2)$ $(k_1,k_2\geq 1)$.  Let $d=m+n$ and $h=kh_1+lh_2$.  $$\deg (\Sigma_{m,n}^{k_1, k_2})=(k_1h_1+ k_2h_2)^d=k^{m}l^{n}\deg(\Sigma_{m,n})=k_1^{m}k_2^{n}\frac{d!}{mn!}.$$

\begin{definition} Let $\Vv$ be an Ulrich bundle on $\Sigma$. Let $a=0,\dots, m$, $b=0,\dots , n$ and $i=0,\dots , d$.  We introduce the following notation:

\begin{equation}\label{alph}\alpha_i^{a,b}=h^i(\Vv(-ih)\otimes\Omega^a(a)\boxtimes\Omega^b(b))\end{equation}
\end{definition}

\begin{remark}\label{sunto} Let $\Vv$ be an Ulrich bundle on $\Sigma$. Let $a=0,\dots, m$, $b=0,\dots , n$ and $i=0,\dots , d$.  By Proposition \ref{riv2g} we get

$$h^k(\Vv(-ih)\otimes\Omega^a(a)\boxtimes\Omega^b(b))=0$$ for any $k\not= i$.
\end{remark}

We will use the following version of Beilinson Theorem (see \cite{AHMP} and  \cite{RU}, \cite{GO}, \cite{BO}):

\begin{theorem}\label{use}
Let $X$ be a smooth projective variety with a full exceptional collection
$\langle E_0, \ldots, E_n\rangle$
where $E_i=\mathcal E_i^*[-k_i]$ with each $\mathcal E_i$ a vector bundle and $(k_0, \ldots, k_n)\in \ZZ^{\oplus n+1}$ such that there exists a sequence $\langle F_n=\mathcal F_n, \ldots, F_0=\mathcal F_0\rangle$ of vector bundles satisfying
\begin{equation}\label{order}
\mathrm{Ext}^k(E_i,F_j)=H^{k+k_i}( \mathcal E_i\otimes \mathcal F_j) =  \left\{
\begin{array}{cc}
\mathbb C & \textrm{\quad if $i=j=k$} \\
0 & \textrm{\quad otherwise}
\end{array}
\right.
\end{equation}
i.e. the collection $\langle F_n, \ldots, F_0\rangle$ labelled in the reverse order is the right dual collection of $\langle E_0, \ldots, E_n\rangle$.
Then for any coherent sheaf $A$ on $X$ there is a spectral sequence in the square $-n\leq p\leq 0$, $0\leq q\leq n$  with the $E_1$-term
\[
E_1^{p,q} = \mathrm{Ext}^{q}(E_{-p},A) \otimes F_{-p}=
H^{q+k_{-p}}(\mathcal E_{-p}\otimes A) \otimes \mathcal F_{-p}
\]
which is functorial in $A$ and converges to
\begin{equation}
E_{\infty}^{p,q}= \left\{
\begin{array}{cc}
A & \textrm{\quad if $p+q=0$} \\
0 & \textrm{\quad otherwise.}
\end{array}
\right.
\end{equation}
\end{theorem}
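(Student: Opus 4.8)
The plan is to obtain this as the classical ``resolution of the diagonal'' argument, carried out for an arbitrary full exceptional collection and with the homological shifts $(k_0,\dots,k_n)$ tracked through the bookkeeping. First I would use that, because $\langle E_0,\dots,E_n\rangle$ is a \emph{full} exceptional collection of $D^b(X)$ and $\langle F_n,\dots,F_0\rangle$ is its right dual collection --- which is precisely the content of the hypothesis \eqref{order} --- the diagonal sheaf $\Delta_*\Oo_X\in D^b(X\times X)$ admits a finite Postnikov system (a ``resolution of the diagonal'') whose graded pieces are the external products $\mathcal E_i\boxtimes\mathcal F_i$, each placed in the homological degree prescribed by $k_i$; concretely, $\Delta_*\Oo_X$ is the convolution of a complex of sheaves on $X\times X$ of the form $\mathcal E_n\boxtimes\mathcal F_n\to\cdots\to\mathcal E_0\boxtimes\mathcal F_0$ up to the appropriate shifts. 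This is exactly the output of the theory of dual exceptional collections in the references \cite{RU,GO,BO,AHMP}: it is built by induction on the length of the collection via mutations; smoothness of $X$ keeps the system finite; fullness guarantees that $\Delta_*\Oo_X$ lies in the triangulated subcategory of $D^b(X\times X)$ generated by the $\mathcal E_i\boxtimes\mathcal F_i$; and the hypothesis that the right dual collection consists of honest vector bundles $\mathcal F_i$ is what makes the graded pieces sheaves. (Equivalently, one may work directly with the Postnikov filtration attached to the semiorthogonal decomposition $D^b(X)=\langle\langle E_0\rangle,\dots,\langle E_n\rangle\rangle$, in which the right dual collection appears as the family of mutated generators.)

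Second, I would feed an arbitrary coherent sheaf $A$ through the Fourier--Mukai functor with kernel $\Delta_*\Oo_X$, which is the identity on $D^b(X)$: writing $p_1,p_2\colon X\times X\to X$ for the two projections,
\[
A\;\simeq\;Rp_{2*}\bigl(p_1^*A\otimes^{L}\Delta_*\Oo_X\bigr).
\]
Substituting the convolution from the first step and using the projection formula and flat base change, each external product contributes $Rp_{2*}\bigl(p_1^*A\otimes^{L}(\mathcal E_i\boxtimes\mathcal F_i)\bigr)\simeq R\Gamma\bigl(X,\,A\otimes\mathcal E_i\bigr)\otimes_{\CC}\mathcal F_i$, so $A$ is exhibited as the convolution of a complex of objects of $D^b(X)$ whose $i$-th term is $R\Gamma(X,A\otimes\mathcal E_i)\otimes\mathcal F_i$, shifted by $k_i$.

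Third, I would read off the spectral sequence as the hypercohomology (Postnikov) spectral sequence of this last convolution. Taking $p$ to be the column index, normalized so that the $i$-th column sits in position $p=-i$, and $q$ the internal cohomological degree, its first page is
\[
E_1^{p,q}\;=\;H^{q+k_{-p}}\bigl(X,\mathcal E_{-p}\otimes A\bigr)\otimes\mathcal F_{-p}\;=\;\mathrm{Ext}^{q}(E_{-p},A)\otimes F_{-p},
\]
the last identification being just the definition $E_i=\mathcal E_i^{*}[-k_i]$ applied inside $R\mathrm{Hom}(-,A)$. The indices run in the asserted square $-n\le p\le 0$, $0\le q\le n$ because the convolution has exactly $n+1$ terms and the shifts $k_i$ are normalized accordingly. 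Functoriality in $A$ is immediate, since the resolution of the diagonal is fixed once and for all and the Fourier--Mukai functor is functorial; and convergence to $A$, concentrated on the antidiagonal $p+q=0$, is nothing but the statement that the Fourier--Mukai functor we used is the identity.

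The only genuinely nontrivial ingredient is the first step: producing the resolution of the diagonal with graded pieces $\mathcal E_i\boxtimes\mathcal F_i$ and keeping precise account of the shifts $k_i$, so that the resulting spectral sequence has exactly the stated $E_1$-page and range. Everything downstream is formal manipulation with Fourier--Mukai functors and with the Postnikov spectral sequence, so in practice one may simply invoke \cite{AHMP} (together with \cite{RU,GO,BO}); the sketch above serves to make transparent where fullness, smoothness, and the shift conventions enter.
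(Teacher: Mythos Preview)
Your sketch is a faithful outline of the standard ``resolution of the diagonal'' proof of the generalized Beilinson spectral sequence, and there is nothing to correct in it. However, you should be aware that the paper does not give a proof of this theorem at all: it is quoted as a known result, with the attribution ``(see \cite{AHMP} and \cite{RU}, \cite{GO}, \cite{BO})'', and is used as a black box in the subsequent arguments. So there is no ``paper's own proof'' to compare against; your write-up is essentially a summary of what one finds in those references, and in that sense it matches the intended source exactly.
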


Now we construct the full exceptional collections that we will use in the next theorems:
 Let us consider on  $\PP^m$ the full exceptional collection $\{\Oo_{\PP^m}(-m),\dots , \Oo_{\PP^m}(-1), \Oo_{\PP^m}\}$ and on  $\PP^n$ the full exceptional collection $\{\Oo_{\PP^n}(-n),\dots , \Oo_{\PP^n}(-1), \Oo_{\PP^n}\}$. We may obtain (see \cite{Orlov}):

\begin{equation}\label{col}\begin{aligned}
&\Ee_{(m+1)(n+1)-1}[k_{(m+1)(n+1)-1}]=\Oo(-m,-n)[-(m+1)(n+1)+d+1]~,~\\
 &\Ee_{(m+1)(n+1)-2}[k_{(m+1)(n+1)-2}]=\Oo(-m+1,-n)[-(m+1)(n+1)+d+1]~,~\\
 & \Ee_{(m+1)(n+1)-3}[k_{(m+1)(n+1)-3}]=\Oo(-m,-n+1)[-(m+1)(n+1)+d+2],\dots \\
& \dots, \Ee_9[k_9]=\Oo(0,-3)[-6]~, \Ee_8[k_8]=\Oo(-1,-2)[-5]~,~ \Ee_7[k_7]=\Oo(-2,-1)[-4]~,~ \Ee_6[k_6]=\Oo(-3,0)[-3], \\
& \Ee_5[k_5]=\Oo(0,-2)[-3]~,~ \Ee_4[k_4]=\Oo(-1,-1)[-2]~,~ \Ee_3[k_3]=\Oo(-2,0)[-1], \\
& \Ee_2[k_2]=\Oo(0,-1)[-1]~,~ \Ee_1[k_1]=\Oo(-1,0)~,~ \Ee_0[k_0]=\Oo.
\end{aligned}
\end{equation}
The associated full exceptional collection $\langle F_d=\mathcal F_d, \ldots, F_0=\mathcal F_0\rangle$ of Theorem \ref{use} is

\begin{equation}\label{cold}\begin{aligned}
&\Ff_{(m+1)(n+1)-1}=\Oo(-1,-1)~,~\Ff_{(m+1)(n+1)-2}[k_{(m+1)(n+1)-2}]=\Omega^{m-1}(m-1)\boxtimes\Oo(-1)~,~\\
 & \Ff_{(m+1)(n+1)-3}=\Oo(-1)\boxtimes\Omega^{n-1}(n-1)), \dots, \Ff_9=\Oo\boxtimes\Omega^3(3)~, \\
&\Ff_8=\Omega^1(1)\boxtimes\Omega^2(2)~,~ \Ff_7=\Omega^2(2)\boxtimes\Omega^1(1)~,~ \Ff_6=\Omega^3(3)\boxtimes\Oo, \\
&\Ff_5=\Oo\boxtimes\Omega^2(2)~,~\Ff_4=\Omega(1)\boxtimes\Omega(1),~ \Ff_3=\Omega^2(2)\boxtimes\Oo~, \\
&\Ff_2=\Oo\boxtimes\Omega(1)~,~ \Ff_1=\Omega(1)\boxtimes\Oo~,~ \Ff_0=\Oo.
\end{aligned}\end{equation}

Let us call $\Gg^{a,b}=\Omega^a(a)\boxtimes\Omega^b(b)$. Notice that $\Omega^m(m)=\Oo(-1)$ and $h^i(\Gg^{a,b}\otimes\Oo(s,t))\not=0$ if and only if $s=a,t=b, i=a+b$. So we get the orthogonality conditions. Notice moreover that $Ext^k(E_i,E_j)=Ext^k(F_i,F_j)=0$ for $k>0$ and any $i,j$,  we have that (\ref{col}) and (\ref{cold}) are  full exceptional collections.

\begin{remark}\label{remb}
It is possible to state a stronger version of the Beilinson's theorem (see  \cite{A} Remark 2.4). Let us consider and let $A$ be a coherent sheaf on $X$. Let $\langle E_0, \ldots, E_n\rangle$ be a full exceptional collection and $\langle F_0, \ldots, F_n\rangle$ its right dual collection. Using the notation of Theorem \ref{use}, if $\langle F_0, \ldots, F_n\rangle$ is strong then there exists a complex of vector bundles $L^\bullet$ such that
\begin{enumerate}
\item $H^k(L^\bullet)=
\begin{cases}
A \ & \text{if $k=0$},\\
0 \ & \text{otherwise}.
\end{cases}$
\item $L^k=\underset{k=p+q}{\bigoplus}H^{q+k_{-p}}(A\otimes E_{-p})\otimes F_{-p}$ with $0\le q \le n$ and $-n\le p \le 0$.
\end{enumerate}
\end{remark}

\begin{theorem}\label{volon}
Let $\Vv$ be an Ulrich bundle on $\Sigma$. Let $a=0,\dots, m$, $b=0,\dots , n$ and $p=0,\dots , d$. Then $\Vv$ arises from an exact sequence of the form:

\begin{equation}\label{res}
0\to\Oo(-m,-n)^{\oplus \alpha_0^{m,n}}\to\Oo(-m,-n+1)^{\oplus \alpha_0^{m,n-1}}\oplus\Oo(-m+1,-n)^{\oplus \alpha_0^{m-1,n}}\to\dots\end{equation}
$$\dots\to\oplus_{a+b=p}\Oo(-a,-b)^{\oplus \alpha_0^{a,b}}\to\dots\to\Oo(-1,0)^{\oplus \alpha_0^{1,0}}\oplus\Oo(0,-1)^{\oplus \alpha_0^{0,1}}\to\Oo^{\oplus \alpha_0^{0,0}}\to \Vv\to 0,$$

or

\begin{equation}\label{res2}
0\to\Oo(-m,-n)^{\oplus \alpha_1^{m,n}}\to\Oo(-m,-n+1)^{\oplus \alpha_1^{m,n-1}}\oplus\Oo(-m+1,-n)^{\oplus \alpha_1^{m-1,n}}\to\dots\end{equation}
$$\dots\to\oplus_{a+b=p}\Oo(-a,-b)^{\oplus \alpha_1^{a,b}}\to\dots\to\Oo(-1,0)^{\oplus \alpha_1^{1,0}}\oplus\Oo(0,-1)^{\oplus \alpha_1^{0,1}}\to \Vv(-h)\to 0,$$

or

\begin{equation}\label{res3}
0\to\Vv(-dh)\to\Oo(-m,-n)^{\oplus \alpha_d^{m,n}}\to\Oo(-m,-n+1)^{\oplus \alpha_d^{m,n-1}}\oplus\Oo(-m+1,-n)^{\oplus \alpha_d^{m-1,n}}\to\dots\end{equation}
$$\dots\to\oplus_{a+b=p}\Oo(-a,-b)^{\oplus \alpha_d^{a,b}}\to\dots\to\Oo(-1,0)^{\oplus \alpha_d^{1,0}}\oplus\Oo(0,-1)^{\oplus \alpha_d^{0,1}}\to  0,$$
or $\Vv(-(d-1)h)$ is the homology of the following monad
\begin{equation}\label{res4}
0\to\Oo(-m,-n)^{\oplus \alpha_{d-1}^{m,n}}\to\Oo(-m,-n+1)^{\oplus \alpha_{d-1}^{m,n-1}}\oplus\Oo(-m+1,-n)^{\oplus \alpha_{d-1}^{m-1,n}}\to\Bb\end{equation} where $\Bb$ is defined by the exact sequence
$$\Bb\to\oplus_{a+b=d-2}\Oo(-a,-b)^{\oplus \alpha_{d-1}^{a,b}}\to\dots\to\Oo(-1,0)^{\oplus \alpha_{d-1}^{1,0}}\oplus\Oo(0,-1)^{\oplus \alpha_{d-1}^{0,1}}\to  0,$$
or for $1<q<d-1$ $\Vv(-qh)$ is the homology of the following monad

\begin{equation}\label{res5}
0\to\Bb_1\to\oplus_{a+b=q}\Oo(-a,-b)^{\oplus \alpha_{q}^{a,b}}\to\Bb_2\to 0,\end{equation}
where $\Bb_1$ is defined by the exact sequence
$$0\to\Oo(-m,-n)^{\oplus \alpha_{q}^{m,n}}\to\dots\to \oplus_{a+b=q+1}\Oo(-a,-b)^{\oplus \alpha_{q}^{a,b}}\to\Bb_1\to 0$$
and $\Bb_2$ is defined by the exact sequence
$$0\to\Bb_2\to \oplus_{a+b=q-1}\Oo(-a,-b)^{\oplus \alpha_{q}^{a,b}}\to\dots\to\Oo(-1,0)^{\oplus \alpha_{q}^{1,0}}\oplus\Oo(0,-1)^{\oplus \alpha_{q}^{0,1}}\to 0.$$

\end{theorem}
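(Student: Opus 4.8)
The plan is to derive all five statements at once by feeding the sheaf $A=\Vv(-qh)$, for each $q=0,1,\dots,d$, into the Beilinson-type machine of Theorem~\ref{use}, in the strong form of Remark~\ref{remb}, using the full exceptional collections \eqref{col} and \eqref{cold}. The hypotheses are already in place: both collections are full \emph{strong} exceptional collections (all the relevant higher $\Ext$ groups vanish), and the orthogonality relations \eqref{order} hold because $h^i(\Gg^{a,b}\otimes\Oo(s,t))\neq0$ exactly when $s=a$, $t=b$, $i=a+b$ — this is Bott's formula on $\PP^m$ and $\PP^n$ combined with the Künneth formula, as recorded just before Remark~\ref{remb}. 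Remark~\ref{remb} then produces a bounded complex of vector bundles $L^\bullet$ with $H^0(L^\bullet)\cong\Vv(-qh)$ and $H^j(L^\bullet)=0$ for $j\neq0$.

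The next step is to compute $L^\bullet$ explicitly. By construction of the dual collection, the summand of $L^\bullet$ indexed by the pair $(a,b)$ is $H^{\bullet}\bigl(\Vv(-qh)\otimes\Gg^{a,b}\bigr)\otimes\Oo(-a,-b)$, and the cohomological shifts $k_i$ appearing in \eqref{col} are precisely the ones dictated by \eqref{order}, so that this summand lies in homological degree $q-(a+b)$. Now invoke Remark~\ref{sunto}: $h^k\bigl(\Vv(-qh)\otimes\Gg^{a,b}\bigr)=0$ for $k\neq q$ and equals $\alpha_q^{a,b}$ for $k=q$. Hence $L^\bullet$ is exactly the complex
\[
\cdots\to\bigoplus_{a+b=q+1}\Oo(-a,-b)^{\oplus\alpha_q^{a,b}}\to\bigoplus_{a+b=q}\Oo(-a,-b)^{\oplus\alpha_q^{a,b}}\to\bigoplus_{a+b=q-1}\Oo(-a,-b)^{\oplus\alpha_q^{a,b}}\to\cdots,
\]
with the term indexed by $a+b=q$ sitting in degree $0$.

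It then remains to read off the five cases from the shape of $L^\bullet$. Since $0\le a\le m$ and $0\le b\le n$, the nonzero terms satisfy $0\le a+b\le d$, with $a+b=d$ occurring only for $(a,b)=(m,n)$ and $a+b=0$ only for $(a,b)=(0,0)$; thus $L^\bullet$ is concentrated in homological degrees between $q-d$ and $q$, with extreme terms $\Oo(-m,-n)^{\oplus\alpha_q^{m,n}}$ and $\Oo^{\oplus\alpha_q^{0,0}}$. Because $\Vv$ is Ulrich, $\alpha_q^{0,0}=h^q(\Vv(-qh))=0$ whenever $1\le q\le d$, so the degree-$q$ term is absent for $q\ge1$. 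For $q=0$ the complex occupies degrees $[-d,0]$ and, being a resolution of $\Vv$, gives \eqref{res}; for $q=1$ it occupies $[1-d,0]$, giving \eqref{res2}; for $q=d$ it occupies $[0,d-1]$ with $\Vv(-dh)=\ker(L^0\to L^1)$, giving \eqref{res3}. For $q=d-1$ the only term in negative degree is $L^{-1}=\Oo(-m,-n)^{\oplus\alpha_{d-1}^{m,n}}$, and setting $\Bb=\Image(L^0\to L^1)$ yields the monad \eqref{res4}. For $1<q<d-1$ one truncates $L^\bullet$ at degree $0$: with $\Bb_1=\Image(L^{-1}\to L^0)$ and $\Bb_2=\Image(L^0\to L^1)$, the vanishing of $H^j(L^\bullet)$ for $j\neq0$ gives the two defining exact sequences for $\Bb_1$ and $\Bb_2$, while $0\to\Bb_1\to L^0\to\Bb_2\to0$ has homology $\ker(L^0\to L^1)/\Image(L^{-1}\to L^0)=\Vv(-qh)$, which is \eqref{res5}.

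The step I expect to be the real work is the one in the second paragraph: verifying the orthogonality relations \eqref{order} with the correct cohomological shifts, equivalently checking that each $\Oo(-a,-b)$ enters $L^\bullet$ in homological degree exactly $q-(a+b)$ and with multiplicity $\alpha_q^{a,b}$. This rests entirely on Bott vanishing on the factors $\PP^m$ and $\PP^n$ together with Künneth, and once it is established the passage from $L^\bullet$ to \eqref{res}--\eqref{res5} is purely formal, needing only the Ulrich vanishing $\alpha_q^{0,0}=0$ to trim the extremal terms.
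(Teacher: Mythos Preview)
Your proposal is correct and follows essentially the same approach as the paper: apply the Beilinson-type spectral sequence of Theorem~\ref{use} in the strong form of Remark~\ref{remb} to $\Vv(-qh)$ using the collections \eqref{col}--\eqref{cold}, then invoke Remark~\ref{sunto} to see that in each column only the degree-$q$ cohomology survives, so the resulting complex has $\Oo(-a,-b)^{\oplus\alpha_q^{a,b}}$ in homological degree $q-(a+b)$. The paper carries this out case by case (writing out the cohomology tables for $q=0,1,d,d-1$ and general $1<q<d-1$ separately), whereas you treat all five cases uniformly and then specialize; otherwise the argument is identical.
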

\begin{proof}
We consider the Beilinson type spectral sequence associated to $\Vv$ and identify the members of the graded sheaf associated to the induced filtration as the sheaves mentioned in the statement. We consider the full exceptional collection $\Ee_{\bullet}$ given in (\ref{col}) and  collection $\Ff_{\bullet}$ given in (\ref{cold}).

 We construct a Beilinson complex, quasi-isomorphic to $\Vv$, by calculating $H^{i+k_j}(\Vv\otimes \Ff_j)\otimes \Ee_j$ with  $i,j \in \{0, \ldots, d\}$ to get the following table (We put the collection
$\langle E_0, \ldots, E_n\rangle$ in the top row and the collection $\langle F_0, \ldots, F_n\rangle$ in the bottom row, see \cite{AHMP}):

\begin{center}\begin{tabular}{|c|c|c|c|c|c|c|c|c|c|c|}
\hline
$\Oo(-m,-n)$ & $\Oo(-m+1,-n)$	 	 &$\Oo(-m,-n+1)$ &\dots  &$\Oo(0,-n)$ &\dots & $\Oo(-n,0)$& \dots
&$\Oo(0,-1)$&$\Oo(-1,0)$ &$\Oo$\\
\hline
\hline
$H^{d}$        &$H^{d}$    &$0$	 & \dots		 	& $0$  & \dots	    &	 $0$	& \dots		 	& $0$		 	& $0$			& $0$	\\
$H^{d-1}$        &$H^{d-1}$  &$H^{d}$	 & \dots		 	& $0$  & \dots     &	 $0$	 	& \dots		    & $0$		    & $0$			&$0$\\
\vdots     &\vdots     &\vdots	& \vdots		 	& \vdots & \vdots    &	 \vdots	    & \vdots		& \vdots		& 	\vdots		& \vdots	\\
$H^0$        &$H^{0}$  &$H^{1}$	 & \dots		 	& $0$  & \dots     &	 $0$	& \dots		 	& $0$		 	& $0$			& $0$	\\
$0$        &$0$        &$H^0$	 & \dots		 	& $H^d$  & \dots    &	 $0$	 	& \dots	 	    & $0$		 	& $0$		& $0$	\\
\vdots     &\vdots     &\vdots	& \vdots		 	& \vdots & \vdots    &	 \vdots	    & \vdots		& \vdots		& 	\vdots		& \vdots	\\
$0$        &$0$        &$0$	 & \dots		 	& $H^0$  & \dots    &	 $H^d$	 	& \dots	 	    & $0$		 	& $0$		& $0$	\\
\vdots     &\vdots     &\vdots	& \vdots		 	& \vdots & \vdots   &	 \vdots	    & \vdots		& \vdots		& 	\vdots		& \vdots	\\
$0$        &$0$        &$0$	 & \dots		 	& $0$  & \dots    &	 $H^1$	 	& \dots	 	    & $H^d$		 	& $0$		& $0$	\\
$0$        &$0$        &$0$	 & \dots		 	& $0$  & \dots    &	 $H^0$	 	& \dots	 	    & $H^{d-1}$		 	& $H^d$			& $0$	\\
$0$        &$0$        &$0$	 & \dots		 	& $0$  & \dots    &	 $0$	 	& \dots	 	    & $H^{d-2}$		 	& $H^{d-1}$		& $0$	\\
\vdots     &\vdots     &\vdots	& \vdots		 	& \vdots & \vdots   &	 \vdots	    & \vdots		& \vdots		& 	\vdots		& \vdots	\\
$0$        &$0$        &$0$	 & \dots		 	& $0$  & \dots    &	 $0$	 	& \dots		 	& $H^1$		 	& $H^2$			& $0$	\\
$0$        &$0$        &$0$	 & \dots		 	& $0$  & \dots    &	 $0$	 	& \dots		 	& $H^0$		 	& $H^1$			& $0$	\\
$0$        &$0$        &$0$	 & \dots		 	& $0$  & \dots    &	 $0$	 	& \dots		 	& $0$		 	& $H^0$			& $H^0$	\\

\hline
\hline
$\Oo(-1,-1)$ & $\Gg^{m-1,n}$	 	 &$\Gg^{m,n-1}$ &\dots  &$\Gg^{0,n}$ &\dots & $\Gg^{n,0}$& \dots
&$\Gg^{0,1}$&$\Gg^{1,0}$ &$\Oo$\\
 \hline
\end{tabular}
\end{center}
By Remark \ref{sunto} in every column of the table at most one element is different to zero. We obtain

\begin{center}\begin{tabular}{|c|c|c|c|c|c|c|c|c|c|c|}
\hline
$\Oo(-m,-n)$ & $\Oo(-m+1,-n)$	 	 &$\Oo(-m,-n+1)$ &\dots  &$\Oo(0,-n)$ &\dots & $\Oo(-n,0)$& \dots
&$\Oo(0,-1)$&$\Oo(-1,0)$ &$\Oo$\\
\hline
\hline
$0$        &$0$    &$0$	 & \dots		 	& $0$  & \dots	    &	 $0$	& \dots		 	& $0$		 	& $0$			& $0$	\\
$0$        &$0$  &$0$	 & \dots		 	& $0$  & \dots     &	 $0$	 	& \dots		    & $0$		    & $0$			&$0$\\
\vdots     &\vdots     &\vdots	& \vdots		 	& \vdots & \vdots    &	 \vdots	    & \vdots		& \vdots		& 	\vdots		& \vdots	\\
$\alpha_0^{m,n}$        &$\alpha_0^{m-1,n}$  &$0$	 & \dots		 	& $0$  & \dots     &	 $0$	& \dots		 	& $0$		 	& $0$			 & $0$	\\
$0$        &$0$        &$\alpha_0^{m,n-1}$	 & \dots		 	& $0$  & \dots    &	 $0$	 	& \dots	 	    & $0$		 	& $0$		& $0$	\\
\vdots     &\vdots     &\vdots	& \vdots		 	& \vdots & \vdots    &	 \vdots	    & \vdots		& \vdots		& 	\vdots		& \vdots	\\
$0$        &$0$        &$0$	 & \dots		 	& $\alpha_0^{0,n}$  & \dots    &	 $0$	 	& \dots	 	    & $0$		 	& $0$		& $0$	\\
\vdots     &\vdots     &\vdots	& \vdots		 	& \vdots & \vdots   &	 \vdots	    & \vdots		& \vdots		& 	\vdots		& \vdots	\\
$0$        &$0$        &$0$	 & \dots		 	& $0$  & \dots    &	 $0$	 	& \dots	 	    & $0$		 	& $0$		& $0$	\\
$0$        &$0$        &$0$	 & \dots		 	& $0$  & \dots    &	 $\alpha_0^{m,0}$	 	& \dots	 	    & $0$		 	& $0$			& $0$	\\
$0$        &$0$        &$0$	 & \dots		 	& $0$  & \dots    &	 $0$	 	& \dots	 	    & $0$		 	& $0$		& $0$	\\
\vdots     &\vdots     &\vdots	& \vdots		 	& \vdots & \vdots   &	 \vdots	    & \vdots		& \vdots		& 	\vdots		& \vdots	\\
$0$        &$0$        &$0$	 & \dots		 	& $0$  & \dots    &	 $0$	 	& \dots		 	& $0$		 	& $0$			& $0$	\\
$0$        &$0$        &$0$	 & \dots		 	& $0$  & \dots    &	 $0$	 	& \dots		 	& $\alpha_0^{0,1}$		 	& $0$			& $0$	\\
$0$        &$0$        &$0$	 & \dots		 	& $0$  & \dots    &	 $0$	 	& \dots		 	& $0$		 	& $\alpha_0^{1,0}$			& $\alpha_0^{0,0}$	\\

\hline
\hline
$\Oo(-1,-1)$ & $\Gg^{m-1,n}$	 	 &$\Gg^{m,n-1}$ &\dots  &$\Gg^{0,n}$ &\dots & $\Gg^{n,0}$& \dots
&$\Gg^{0,1}$&$\Gg^{1,0}$ &$\Oo$\\
 \hline
\end{tabular}
\end{center}

 Using Beilinson's theorem in the strong form (as in Remark \ref{remb}) we retrieve the resolution (\ref{res}).

 Now we construct a Beilinson complex, quasi-isomorphic to $\Vv(-h)$, by calculating $H^{i+k_j}(\Vv\otimes \Ff_j)\otimes \Ee_j$ with  $i,j \in \{0, \ldots, d\}$ and by Remark \ref{sunto} in every column of the table at most one element is different to zero. So we get the following table:

\begin{center}\begin{tabular}{|c|c|c|c|c|c|c|c|c|c|c|}
\hline
$\Oo(-m,-n)$ & $\Oo(-m+1,-n)$	 	 &$\Oo(-m,-n+1)$ &\dots  &$\Oo(0,-n)$ &\dots & $\Oo(-n,0)$& \dots
&$\Oo(0,-1)$&$\Oo(-1,0)$ &$\Oo$\\
\hline
\hline
$0$        &$0$    &$0$	 & \dots		 	& $0$  & \dots	    &	 $0$	& \dots		 	& $0$		 	& $0$			& $0$	\\
$0$        &$0$  &$0$	 & \dots		 	& $0$  & \dots     &	 $0$	 	& \dots		    & $0$		    & $0$			&$0$\\
\vdots     &\vdots     &\vdots	& \vdots		 	& \vdots & \vdots    &	 \vdots	    & \vdots		& \vdots		& 	\vdots		& \vdots	\\
$\alpha_1^{m,n}$        &$\alpha_1^{m-1,n}$  &$0$	 & \dots		 	& $0$  & \dots     &	 $0$	& \dots		 	& $0$		 	& $0$			 & $0$	\\
$0$        &$0$        &$\alpha_1^{m,n-1}$	 & \dots		 	& $0$  & \dots    &	 $0$	 	& \dots	 	    & $0$		 	& $0$		& $0$	\\
\vdots     &\vdots     &\vdots	& \vdots		 	& \vdots & \vdots    &	 \vdots	    & \vdots		& \vdots		& 	\vdots		& \vdots	\\
$0$        &$0$        &$0$	 & \dots		 	& $\alpha_1^{0,n}$  & \dots    &	 $0$	 	& \dots	 	    & $0$		 	& $0$		& $0$	\\
\vdots     &\vdots     &\vdots	& \vdots		 	& \vdots & \vdots   &	 \vdots	    & \vdots		& \vdots		& 	\vdots		& \vdots	\\
$0$        &$0$        &$0$	 & \dots		 	& $0$  & \dots    &	 $\alpha_1^{m,0}$	 	& \dots	 	    & $0$		 	& $0$		& $0$	\\
$0$        &$0$        &$0$	 & \dots		 	& $0$  & \dots    &	 $0$	 	& \dots	 	    & $0$		 	& $0$			& $0$	\\
$0$        &$0$        &$0$	 & \dots		 	& $0$  & \dots    &	 $0$	 	& \dots	 	    & $0$		 	& $0$		& $0$	\\
\vdots     &\vdots     &\vdots	& \vdots		 	& \vdots & \vdots   &	 \vdots	    & \vdots		& \vdots		& 	\vdots		& \vdots	\\
$0$        &$0$        &$0$	 & \dots		 	& $0$  & \dots    &	 $0$	 	& \dots		 	& $\alpha_1^{0,1}$		 	& $0$			& $0$	\\
$0$        &$0$        &$0$	 & \dots		 	& $0$  & \dots    &	 $0$	 	& \dots		 	& $0$		 	& $\alpha_1^{1,0}$			& $0$	\\
$0$        &$0$        &$0$	 & \dots		 	& $0$  & \dots    &	 $0$	 	& \dots		 	& $0$		 	& $0$			& $0$	\\

\hline
\hline
$\Oo(-1,-1)$ & $\Gg^{m-1,n}$	 	 &$\Gg^{m,n-1}$ &\dots  &$\Gg^{0,n}$ &\dots & $\Gg^{n,0}$& \dots
&$\Gg^{0,1}$&$\Gg^{1,0}$ &$\Oo$\\
 \hline
\end{tabular}
\end{center}

 So we get the  resolution (\ref{res2}).

 Now we construct a Beilinson complex, quasi-isomorphic to $\Vv(-dh)$, by calculating $H^{i+k_j}(\Vv\otimes \Ff_j)\otimes \Ee_j$ with  $i,j \in \{0, \ldots, d\}$ and by Remark \ref{sunto} in every column of the table at most one element is different to zero. So we get the following table:

\begin{center}\begin{tabular}{|c|c|c|c|c|c|c|c|c|c|c|}
\hline
$\Oo(-m,-n)$ & $\Oo(-m+1,-n)$	 	 &$\Oo(-m,-n+1)$ &\dots  &$\Oo(0,-n)$ &\dots & $\Oo(-n,0)$& \dots
&$\Oo(0,-1)$&$\Oo(-1,0)$ &$\Oo$\\
\hline
\hline
$\alpha_d^{m,n}$        &$\alpha_d^{m-1,n}$    &$0$	 & \dots		 	& $0$  & \dots	    &	 $0$	& \dots		 	& $0$		 	& $0$			 & $0$	\\
$0$        &$0$  &$\alpha_d^{m,n-1}$	 & \dots		 	& $0$  & \dots     &	 $0$	 	& \dots		    & $0$		    & $0$			&$0$\\
\vdots     &\vdots     &\vdots	& \vdots		 	& \vdots & \vdots    &	 \vdots	    & \vdots		& \vdots		& 	\vdots		& \vdots	\\
$0$        &$0$  &$0$	 & \dots		 	& $0$  & \dots     &	 $0$	& \dots		 	& $0$		 	& $0$			 & $0$	\\
$0$        &$0$        &$0$	 & \dots		 	& $0$  & \dots    &	 $0$	 	& \dots	 	    & $0$		 	& $0$		& $0$	\\
\vdots     &\vdots     &\vdots	& \vdots		 	& \vdots & \vdots    &	 \vdots	    & \vdots		& \vdots		& 	\vdots		& \vdots	\\
$0$        &$0$        &$0$	 & \dots		 	& $\alpha_d^{0,n}$  & \dots    &	 $0$	 	& \dots	 	    & $0$		 	& $0$		& $0$	\\
\vdots     &\vdots     &\vdots	& \vdots		 	& \vdots & \vdots   &	 \vdots	    & \vdots		& \vdots		& 	\vdots		& \vdots	\\
$0$        &$0$        &$0$	 & \dots		 	& $0$  & \dots    &	 $\alpha_d^{m,0}$	 	& \dots	 	    & $0$		 	& $0$		& $0$	\\
$0$        &$0$        &$0$	 & \dots		 	& $0$  & \dots    &	 $0$	 	& \dots	 	    & $\alpha_d^{0,1}$		 	& $0$			& $0$	\\
$0$        &$0$        &$0$	 & \dots		 	& $0$  & \dots    &	 $0$	 	& \dots	 	    & $0$		 	& $\alpha_d^{1,0}$	&$0$	 \\
\vdots     &\vdots     &\vdots	& \vdots		 	& \vdots & \vdots   &	 \vdots	    & \vdots		& \vdots		& 	\vdots		& \vdots	\\
$0$        &$0$        &$0$	 & \dots		 	& $0$  & \dots    &	 $0$	 	& \dots		 	& $0$		 	& $0$			& $0$	\\
$0$        &$0$        &$0$	 & \dots		 	& $0$  & \dots    &	 $0$	 	& \dots		 	& $0$		 	& $0$			& $0$	\\
$0$        &$0$        &$0$	 & \dots		 	& $0$  & \dots    &	 $0$	 	& \dots		 	& $0$		 	& $0$			& $0$	\\

\hline
\hline
$\Oo(-1,-1)$ & $\Gg^{m-1,n}$	 	 &$\Gg^{m,n-1}$ &\dots  &$\Gg^{0,n}$ &\dots & $\Gg^{n,0}$& \dots
&$\Gg^{0,1}$&$\Gg^{1,0}$ &$\Oo$\\
 \hline
\end{tabular}
\end{center}

 So we get the  resolution (\ref{res3}).

 Now we construct a Beilinson complex, quasi-isomorphic to $\Vv(-(d-1)h)$, by calculating $H^{i+k_j}(\Vv\otimes \Ff_j)\otimes \Ee_j$ with  $i,j \in \{0, \ldots, d\}$ and by Remark \ref{sunto} in every column of the table at most one element is different to zero. So we get the following table:

\begin{center}\begin{tabular}{|c|c|c|c|c|c|c|c|c|c|c|}
\hline
$\Oo(-m,-n)$ & $\Oo(-m+1,-n)$	 	 &$\Oo(-m,-n+1)$ &\dots  &$\Oo(0,-n)$ &\dots & $\Oo(-n,0)$& \dots
&$\Oo(0,-1)$&$\Oo(-1,0)$ &$\Oo$\\
\hline
\hline
$0$        &$0$    &$0$	 & \dots		 	& $0$  & \dots	    &	 $0$	& \dots		 	& $0$		 	& $0$			 & $0$	\\
$\alpha_{d-1}^{m,n}$        &$\alpha_{d-1}^{m-1,n}$  &$0$	 & \dots		 	& $0$  & \dots     &	 $0$	 	& \dots		    & $0$		    & $0$			&$0$\\
$0$        &$0$  &$\alpha_{d-1}^{m-1,n}$	 & \dots		 	& $0$  & \dots     &	 $0$	& \dots		 	& $0$		 	& $0$			 & $0$	 \\
\vdots     &\vdots     &\vdots	& \vdots		 	& \vdots & \vdots    &	 \vdots	    & \vdots		& \vdots		& 	\vdots		& \vdots	\\
$0$        &$0$  &$0$	 & \dots		 	& $0$  & \dots     &	 $0$	& \dots		 	& $0$		 	& $0$			 & $0$	\\
\vdots     &\vdots     &\vdots	& \vdots		 	& \vdots & \vdots    &	 \vdots	    & \vdots		& \vdots		& 	\vdots		& \vdots	\\
$0$        &$0$        &$0$	 & \dots		 	& $\alpha_{d-1}^{0,n}$  & \dots    &	 $0$	 	& \dots	 	    & $0$		 	& $0$		& $0$	 \\
\vdots     &\vdots     &\vdots	& \vdots		 	& \vdots & \vdots   &	 \vdots	    & \vdots		& \vdots		& 	\vdots		& \vdots	\\
$0$        &$0$        &$0$	 & \dots		 	& $0$  & \dots    &	 $\alpha_{d-1}^{m,0}$	 	& \dots	 	    & $0$		 	& $0$		& $0$	\\
$0$        &$0$        &$0$	 & \dots		 	& $0$  & \dots    &	 $0$	 	& \dots	 	    & $0$		 	& $0$			& $0$	\\
$0$        &$0$        &$0$	 & \dots		 	& $0$  & \dots    &	 $0$	 	& \dots	 	    & $\alpha_{d-1}^{1,0}$		 	& $0$	&$0$	 \\
$0$        &$0$        &$0$	 & \dots		 	& $0$  & \dots    &	 $0$	 	& \dots		 	& $0$		 	& $\alpha_{d-1}^{0,1}$			& $0$	 \\
\vdots     &\vdots     &\vdots	& \vdots		 	& \vdots & \vdots   &	 \vdots	    & \vdots		& \vdots		& 	\vdots		& \vdots	\\
$0$        &$0$        &$0$	 & \dots		 	& $0$  & \dots    &	 $0$	 	& \dots		 	& $0$		 	& $0$			& $0$	\\

\hline
\hline
$\Oo(-1,-1)$ & $\Gg^{m-1,n}$	 	 &$\Gg^{m,n-1}$ &\dots  &$\Gg^{0,n}$ &\dots & $\Gg^{n,0}$& \dots
&$\Gg^{0,1}$&$\Gg^{1,0}$ &$\Oo$\\
 \hline
\end{tabular}
\end{center}

 So we get the  monad (\ref{res4}).

 Finally we construct a Beilinson complex, quasi-isomorphic to $\Vv(-qh)$ with $1<q<d-1$, by calculating $H^{i+k_j}(\Vv\otimes \Ff_j)\otimes \Ee_j$ with  $i,j \in \{0, \ldots, d\}$ and by Remark \ref{sunto} in every column of the table at most one element is different to zero. So we get the following table:

\begin{center}\begin{tabular}{|c|c|c|c|c|c|c|c|c|c|c|}
\hline
$\Oo(-m,-n)$ & $\Oo(-m+1,-n)$	 	 &$\Oo(-m,-n+1)$ &\dots  &$\Oo(0,-n)$ &\dots & $\Oo(-n,0)$& \dots
&$\Oo(0,-1)$&$\Oo(-1,0)$ &$\Oo$\\
\hline
\hline
$0$        &$0$    &$0$	 & \dots		 	& $0$  & \dots	    &	 $0$	& \dots		 	& $0$		 	& $0$			 & $0$	\\
\vdots     &\vdots     &\vdots	& \vdots		 	& \vdots & \vdots    &	 \vdots	    & \vdots		& \vdots		& 	\vdots		& \vdots	\\
$\alpha_{q}^{m,n}$        &$\alpha_{q}^{m-1,n}$  &$0$	 & \dots		 	& $0$  & \dots     &	 $0$	 	& \dots		    & $0$		    & $0$			 &$0$\\
$0$        &$0$  &$\alpha_{q}^{m-1,n}$	 & \dots		 	& $0$  & \dots     &	 $0$	& \dots		 	& $0$		 	& $0$			 & $0$	 \\
\vdots     &\vdots     &\vdots	& \vdots		 	& \vdots & \vdots    &	 \vdots	    & \vdots		& \vdots		& 	\vdots		& \vdots	\\
$0$        &$0$  &$0$	 & \dots		 	& $0$  & \dots     &	 $0$	& \dots		 	& $0$		 	& $0$			 & $0$	\\
\vdots     &\vdots     &\vdots	& \vdots		 	& \vdots & \vdots    &	 \vdots	    & \vdots		& \vdots		& 	\vdots		& \vdots	\\
$0$        &$0$        &$0$	 & \dots		 	& $\alpha_{q}^{0,n}$  & \dots    &	 $0$	 	& \dots	 	    & $0$		 	& $0$		& $0$	 \\
\vdots     &\vdots     &\vdots	& \vdots		 	& \vdots & \vdots   &	 \vdots	    & \vdots		& \vdots		& 	\vdots		& \vdots	\\
$0$        &$0$        &$0$	 & \dots		 	& $0$  & \dots    &	 $\alpha_{q}^{m,0}$	 	& \dots	 	    & $0$		 	& $0$		& $0$	\\
\vdots     &\vdots     &\vdots	& \vdots		 	& \vdots & \vdots    &	 \vdots	    & \vdots		& \vdots		& 	\vdots		& \vdots	\\
$0$        &$0$        &$0$	 & \dots		 	& $0$  & \dots    &	 $0$	 	& \dots	 	    & $\alpha_{q}^{1,0}$		 	& $0$	&$0$	 \\
$0$        &$0$        &$0$	 & \dots		 	& $0$  & \dots    &	 $0$	 	& \dots		 	& $0$		 	& $\alpha_{q}^{0,1}$			& $0$	 \\
\vdots     &\vdots     &\vdots	& \vdots		 	& \vdots & \vdots   &	 \vdots	    & \vdots		& \vdots		& 	\vdots		& \vdots	\\
$0$        &$0$        &$0$	 & \dots		 	& $0$  & \dots    &	 $0$	 	& \dots		 	& $0$		 	& $0$			& $0$	\\

\hline
\hline
$\Oo(-1,-1)$ & $\Gg^{m-1,n}$	 	 &$\Gg^{m,n-1}$ &\dots  &$\Gg^{0,n}$ &\dots & $\Gg^{n,0}$& \dots
&$\Gg^{0,1}$&$\Gg^{1,0}$ &$\Oo$\\
 \hline
\end{tabular}
\end{center}

 So we get the  monad (\ref{res5}).
\end{proof}
\begin{remark}Notice that if we tensor (\ref{res2}), (\ref{res4}), (\ref{res5}) and (\ref{res3}) by $\Oo(qh)$ with $q=1,\dots ,d$  and $\Oo(dh)$ we get resolutions or monads related to $\Vv$ that are all different from each other and from (\ref{res}).
\begin{enumerate}
\item From (\ref{res2}) we deduce that we must have $\alpha_1^{0,1}\not=0$ or $\alpha_1^{1,0}\not=0$.
\item From (\ref{res4}) and (\ref{res5}) we deduce that for any $q=2,\dots,d-1$ it is not possible to have $\alpha_q^{0,q}=\dots =\alpha_q^{q,0}=0$.
    \item If $\Sigma$ is a Segre Variety namely if $k_1=k_2=1$ we get that $\alpha_{i}^{m,n}=h^i(\Vv((-i-1)h))=0$ for any $i<d$ so (\ref{res}), (\ref{res2}), (\ref{res4}), (\ref{res5}) can be simplified.\\
        In particular (\ref{res4}) becomes
        $$0\to\Vv(-(d-1)h)\to\Oo(-m,-n+1)^{\oplus \alpha_{d-1}^{m,n-1}}\oplus\Oo(-m+1,-n)^{\oplus \alpha_{d-1}^{m-1,n}}\to$$ $$\to\oplus_{a+b=d-2}\Oo(-a,-b)^{\oplus \alpha_{d-1}^{a,b}}\to\dots\to\Oo(-1,0)^{\oplus \alpha_{d-1}^{1,0}}\oplus\Oo(0,-1)^{\oplus \alpha_{d-1}^{0,1}}\to  0,$$
    \end{enumerate}
\end{remark}

\section{Families of Ulrich bundles on $\Sigma$}\label{sec4}

We start this section with an example with families of Ulrich bundles supported on every $\Sigma$ obtained as pullbacks from the Veronese varieties $(\PP^m,\Oo_{\PP^m}(k_1))$ and $(\PP^n,\Oo_{\PP^n}(k_2))$:
\begin{example} Let $\Ee$ be an Ulrich bundle on $(\PP^m,\Oo_{\PP^m}(k_1))$ and $\Ff$ be an Ulrich bundle on $(\PP^n,\Oo_{\PP^n}(k_2))$, then $\Ee(nk_1)\boxtimes\Ff$, $\Ee\boxtimes\Ff(mk_2)$ are Ulrich bundles on $\Sigma$. In fact, for $t=1,\dots ,m+n$, $$\Ee(nk_1)\boxtimes\Ff\otimes\Oo(-th)$$ is acyclic since $\Ff(-tk_2)$ is acyclic in $(\PP^n,\Oo_{\PP^n}(k_2))$ for $t=1,\dots ,n$ and $\Ee((n-t)k_2)$ is acyclic in $(\PP^m,\Oo_{\PP^m}(k_1))$ for $t=n+1,\dots ,n+m$.
Similarly for $t=1,\dots ,m+n$, $$\Ee\boxtimes\Ff(mk_2)\otimes\Oo(-th)$$ is acyclic since $\Ff(-tk_2)$ is acyclic in $(\PP^n,\Oo_{\PP^n}(k_2))$ for $t=m+1,\dots ,m+n$ and $\Ee(-tk_2)$ is acyclic in $(\PP^m,\Oo_{\PP^m}(k_1))$ for $t=1,\dots ,m$.

In particular if $k_2=1$ then $\Ee(nk_1)\boxtimes\Oo_{\PP^n}$ and $\Ee\boxtimes\Oo_{\PP^n}(nk_2)$ are Ulrich bundles on $\Sigma$.
\end{example}

\begin{lemma}\label{ulb}
$\Sigma$ supports Ulrich line bundles if and only if we are in one of the following cases:
\begin{itemize}
\item $\Sigma$ is a Segre variety, namely $k_1=k_2=1$. In this case the only Ulrich line bundles are $\Oo(n,0),\Oo(0,m)$.
\item  $n=1$, $m>1$ and  $k_1=1$.  In this case the only Ulrich line bundles are  $\Oo(1,k_2-1)$ and  $\Oo(0,(m+1)k_2-1)$.
\item $m=n=1$. In this case the only Ulrich line bundles are $\Oo(k_1-1,2k_2-1),\Oo(2k_1-1,k_2-1)$.
\end{itemize}

\begin{remark}When $m=1$, $n>1$,  $k_2=1$, $\Sigma$ is a rational normal scroll of dimension two. So a complete classification of Ulrich bundles of every rank is given in \cite{FM}.
\end{remark}
\end{lemma}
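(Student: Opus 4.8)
The plan is to strip the Ulrich condition on a line bundle down to a purely combinatorial statement. Since $\Vv=\Oo(a,b)$ is a line bundle, being Ulrich means exactly that $H^i\big(\Oo(a-tk_1,b-tk_2)\big)=0$ for every $i$ and every $t=1,\dots,m+n$. Because the cohomology of $\Oo(p,q)$ on $\PP^m\times\PP^n$ is computed from those of $\Oo(p)$ and $\Oo(q)$ by the Künneth formula, and $H^i(\PP^m,\Oo(p))$ vanishes for all $i$ precisely when $-m\le p\le-1$, the bundle $\Oo(p,q)$ is acyclic if and only if $-m\le p\le-1$ or $-n\le q\le-1$. So the first step is the reformulation: $\Oo(a,b)$ is Ulrich if and only if for every $t\in\{1,\dots,m+n\}$ one has $a+1\le tk_1\le a+m$ or $b+1\le tk_2\le b+n$.

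Next I would set $T_1=\{t\in\ZZ:\ a+1\le tk_1\le a+m\}$ and $T_2=\{t\in\ZZ:\ b+1\le tk_2\le b+n\}$, each an interval of consecutive integers. An interval of $m$ (resp.\ $n$) consecutive integers contains at most $\ceil{m/k_1}$ (resp.\ $\ceil{n/k_2}$) multiples of $k_1$ (resp.\ $k_2$), so $|T_1|\le\ceil{m/k_1}$ and $|T_2|\le\ceil{n/k_2}$, while the Ulrich condition is precisely $T_1\cup T_2\supseteq\{1,\dots,m+n\}$. Hence $\ceil{m/k_1}+\ceil{n/k_2}\ge|T_1|+|T_2|\ge m+n$. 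Since $\ceil{m/k_1}\le m$ with equality iff $k_1=1$ or $m=1$, and likewise for the other factor, this forces ($k_1=1$ or $m=1$) and ($k_2=1$ or $n=1$); together with $m\ge n\ge1$ this leaves only the three listed regimes, and outside them $\Sigma$ supports no Ulrich line bundle.

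It remains to identify the bundles in each regime. There $\ceil{m/k_1}+\ceil{n/k_2}=m+n$, so $|T_1|+|T_2|=m+n=|T_1\cup T_2|$: the intervals $T_1,T_2$ are disjoint and tile $\{1,\dots,m+n\}$, hence equal $\{1,\dots,|T_1|\}$ and $\{|T_1|+1,\dots,m+n\}$ in one of the two orders. Each order fixes the positions of the two windows and so determines $(a,b)$ uniquely through the endpoint equations — for instance in the regime $n=1$, $m>1$, $k_1=1$ one has $|T_1|=m$, $|T_2|=1$, and the two orders give $a=0$, $b=(m+1)k_2-1$ or $a=1$, $b=k_2-1$. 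Carrying this out in the three regimes yields exactly $\{\Oo(n,0),\Oo(0,m)\}$, $\{\Oo(1,k_2-1),\Oo(0,(m+1)k_2-1)\}$ and $\{\Oo(k_1-1,2k_2-1),\Oo(2k_1-1,k_2-1)\}$; conversely each of these line bundles plainly satisfies the covering criterion, which gives the "if" direction and shows the lists are complete.

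The step needing most care — bookkeeping rather than depth — is the sharp bound $|T_r|\le\ceil{m/k_r}$ together with the equality characterization ($\ceil{m/k_1}=m$ holds exactly when $k_1=1$ or $m=1$), since this is precisely what collapses the a priori infinite parameter space $(m,n,k_1,k_2)$ onto three families. One should also notice that the three cases overlap (e.g.\ $k_1=k_2=1$ with $n=1$ lies in the first two) and check that the corresponding lists of line bundles agree on the overlap; the closing remark about rational normal scrolls is then just the observation that the relevant two-dimensional $\Sigma$ are such scrolls, with \cite{FM} supplying the full classification of Ulrich bundles there.
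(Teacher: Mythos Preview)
Your argument is correct and begins exactly as the paper does: both reduce the Ulrich condition on $\Oo(a,b)$ via K\"unneth to the requirement that for each $t=1,\dots,m+n$ one of the two factors $\Oo(a-tk_1)$, $\Oo(b-tk_2)$ falls into its acyclic window. From that point the executions diverge. The paper proceeds by an informal case split --- asserting, when $m,n>1$, that $m+n$ consecutive acyclic twists force $k_1=k_2=1$, then handling $n=1$ separately and citing \cite{ACM,A} for $m=n=1$ --- without spelling out the counting that underlies these claims. You instead package the obstruction into the single inequality $\lceil m/k_1\rceil+\lceil n/k_2\rceil\ge m+n$, whose equality characterization ($\lceil m/k_r\rceil=m$ iff $k_r=1$ or the dimension is $1$) isolates the three regimes at once; the tiling of $\{1,\dots,m+n\}$ by the two intervals $T_1,T_2$ then determines $(a,b)$ uniformly in every case, including $m=n=1$, with no external reference needed. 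The paper's version is terser but leaves the key finiteness step to the reader; your version makes that step explicit and treats all cases by a single mechanism.
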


\begin{proof}
$\Oo(a,b)$ is Ulrich if and only if $h^i(\Oo(a-tk_1,b-tk_2))=0$ for any integer $i$ and for $t=1,\dots ,m+n$. Recall that $h^i(\PP^m,\Oo_\PP^m(-s))=0$ for any $i$ if and only if $s=1,\dots ,m$ and $h^i(\PP^n,\Oo_\PP^n(-s))=0$ for any $i$ if and only if $s=1,\dots ,n$ so if $m>1$, since $h^i(\Oo(a-tk_1,b-tk_2))=\sum^{i_1+i_2=i}h^{i_1}(\PP^m,\Oo_\PP^m(a-tk_1))h^{i_2}(\PP^n,\Oo_\PP^2(b-tk_2))$, in order to have $\Oo(a,b)$ acyclic for $m+n$ consecutive twist we must have $k_1=k_2=1$. Moreover we get $a=0$ or $b=0$ hence the only Ulrich line bundles are $\Oo(n,0),\Oo(0,m)$.

If $n=1$ for any $k_2>0$ $\Oo_{\PP^1}(-1+k_2t)$ is acyclic for $t=0$ so when $m>1$ and $k_1>1$ there are not Ulrich line bundle and when $k_1=1$ the only Ulrich line bundles are $\Oo(1,k_2-1)$ and  $\Oo(0,(m+1)k_2-1)$.

If $m=n=1$ for any $k_1,k_2>0$ the only Ulrich line bundles are $\Oo(k_1-1,2k_2-1),\Oo(2k_1-1,k_2-1)$ (see \cite{ACM} and \cite{A}).

\end{proof}

\begin{lemma}\label{ulOm}
Let $a=1,\dots ,m, b=1,\dots, n$. $\Omega^a(l)\boxtimes\Omega^b(s)$ is Ulrich on $\Sigma$  in one of the following cases:
\begin{itemize}
\item[(1)] $\Sigma$ is a Segre variety, namely $k_1=k_2=1$, we have only $\Omega^1(n+1)\boxtimes\Omega^{n-1}(n),\Omega^{m-1}(m)\boxtimes\Omega^1(m+1),\Oo(n,0),\Oo(0,m)$.
    \item [(2)] $n=2$ $k_1=1$ and $k_2=2$, we have only  $\Omega^1(3)\boxtimes\Omega^1(7), \Omega^1(7)\boxtimes\Omega^1(3)$
    \item [(3)] $m=n=2$ and $k_1=k_2=2$, we have only  $\Omega^1(3)\boxtimes\Oo(2)$, $\Oo(2)\otimes\Omega^1(3)$
\item [(4)]  $n=1$, $m>1$ and  $k_1=1$. We have that $\Oo(k_1-1)\boxtimes\Omega^b(1)$ $\Omega^a(1)\boxtimes\Oo(k_2-1)$ is Ulrich for any $a=1,\dots , m$.
\item [(5)] $n=1$ and $m=k_1=2$, we have only  $\Oo(k_1-1)\otimes\Omega^1(3), \Oo(3k_1-1)\otimes\Omega^1(7)$ $\Omega^1(3)\boxtimes\Oo(k_2-1), \Omega^1(7)\boxtimes\Oo(3k_2-1)$
\item [(6)] $m=n=1$ we get $\Oo(k_1-1,2k_2-1),\Oo(2k_1-1,k_2-1)$.
\end{itemize}

\end{lemma}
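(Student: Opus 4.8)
The plan is to translate the Ulrich condition into a combinatorial covering problem via the Künneth formula and Bott vanishing, and then to finish by a finite case analysis on $(k_1,k_2,m,n)$.

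\textbf{Reduction.} By definition $\Omega^a(l)\boxtimes\Omega^b(s)$ is Ulrich on $\Sigma=\Sigma_{m,n}^{k_1,k_2}$ if and only if
$$\Omega^a_{\PP^m}(l-tk_1)\boxtimes\Omega^b_{\PP^n}(s-tk_2)$$
is acyclic for every $t=1,\dots,m+n$. Being an external tensor product, its cohomology vanishes (Künneth) exactly when at least one of the two factors is itself acyclic. Next, either by Bott's formula, or — staying inside the toolbox of the paper — by tensoring the Koszul-type resolutions (\ref{a3g}), (\ref{a5g}) on $\PP^N$ with line bundles and chasing, the bundle $\Omega^p_{\PP^N}(c)$ is acyclic precisely when $p-N\le c\le p$ and $c\ne 0$; write $W_N(p)=\{p-N,\dots,p\}\setminus\{0\}$. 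Note that $W_N(p)$ has exactly $N$ elements, forming a single interval when $p\in\{0,N\}$ and otherwise splitting as $\{p-N,\dots,-1\}\cup\{1,\dots,p\}$. Setting
$$T_1=\{t\in\ZZ:\ l-tk_1\in W_m(a)\},\qquad T_2=\{t\in\ZZ:\ s-tk_2\in W_n(b)\},$$
we conclude that $\Omega^a(l)\boxtimes\Omega^b(s)$ is Ulrich on $\Sigma$ if and only if $\{1,\dots,m+n\}\subseteq T_1\cup T_2$.

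\textbf{A necessary inequality.} The interval $\{a-m,\dots,a\}$ contains $m+1$ integers, so the arithmetic progression $l-tk_1$ meets it in at most $\lfloor m/k_1\rfloor+1$ values of $t$; discarding $c=0$ (which lies in the interval since $1\le a\le m$) can only decrease this. Hence $|T_1|\le\lfloor m/k_1\rfloor+1$ and likewise $|T_2|\le\lfloor n/k_2\rfloor+1$, and since $T_1\cup T_2$ must contain $m+n$ integers,
$$m+n\ \le\ |T_1|+|T_2|\ \le\ \Big\lfloor\frac{m}{k_1}\Big\rfloor+\Big\lfloor\frac{n}{k_2}\Big\rfloor+2.$$
This immediately restricts the parameters to a short finite list (for instance $k_1,k_2\ge 2$ forces $m+n\le 4$, and one of them being $\ge 2$ forces the corresponding dimension to be small), while for a Segre variety $k_1=k_2=1$ the inequality is vacuous and must be replaced by the sharper remark that, when $1\le a\le m-1$ resp. $1\le b\le n-1$, the "hole" of $W_m(a)$ resp. $W_n(b)$ at $0$ has to be filled by the other factor, and that $T_1$ sits in an interval of $m+1$ consecutive integers while $T_2$ sits in one of $n+1$, whose union is forced to be the genuine interval $\{1,\dots,m+n\}$. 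In each surviving regime this pins $a,b$ down to boundary values (or to $0$, where $\Omega^0=\Oo$ and $\Omega^N(N)=\Oo(-1)$) and then determines $l,s$; one checks by the criterion above that $\{1,\dots,m+n\}\subseteq T_1\cup T_2$ really holds for the bundles produced, which are exactly those in (1)--(6). The degenerate case $m=n=1$, where $\Omega^1_{\PP^1}(c)=\Oo_{\PP^1}(c-2)$ so the statement coincides with the classification of Ulrich line bundles, follows from Lemma \ref{ulb} (see also \cite{ACM},\cite{A}), and the case $m=1$, $n>1$, $k_2=1$ of the two-dimensional rational normal scroll can instead be quoted from \cite{FM}.

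\textbf{Main obstacle.} The Künneth--Bott reduction and the numerical inequality are routine; essentially all of the work is the final case analysis, and its delicate part is the Segre case $k_1=k_2=1$, where the counting bound gives nothing and one must argue directly that a union of two "interval-minus-one-point" sets of sizes $m$ and $n$ equals $\{1,\dots,m+n\}$ only in the few boundary configurations listed — this is what forces the asymmetric-looking bundles $\Omega^1(n+1)\boxtimes\Omega^{n-1}(n)$ and $\Omega^{m-1}(m)\boxtimes\Omega^1(m+1)$ — together with checking that the enumeration of admissible $(k_1,k_2,m,n,a,b)$ is exhaustive and that in each case the twists $l,s$ are uniquely the ones making the covering work.
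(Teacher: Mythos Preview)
Your approach is essentially the same as the paper's: reduce the Ulrich condition via K\"unneth and Bott to the covering requirement $\{1,\dots,m+n\}\subseteq T_1\cup T_2$, use a cardinality bound to cut the parameters $(k_1,k_2,m,n)$ down to a finite list, and finish by case analysis. Your explicit inequality $m+n\le\lfloor m/k_1\rfloor+\lfloor n/k_2\rfloor+2$ is a cleaner packaging of what the paper states more verbally (``in order to have $m+n$ consecutive acyclic twists we must have $k_1=k_2=1$ or \dots''); conversely, in the Segre case $k_1=k_2=1$ the paper spells out the chain of inequalities (forcing $l=a+1$ or $s=b+1$, then $n+1\le l\le a+n$, hence $a=1$, then $b\ge n-1$) more explicitly than your sketch of the ``interval-minus-one-point'' argument, so there the paper gives a little more than you do, but the underlying idea is identical.
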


\begin{proof}
$\Omega^a(l)\boxtimes\Omega^b(s)$ is Ulrich if and only if $h^i(\Omega^a(l-tk_1)\boxtimes\Omega^b(s-tk_2))=0$ for any integer $i$ and for $t=1,\dots , m+n$. Recall that $h^i(\PP^m,\Omega^a_{\PP^m}(l))=0$ for any $i$ if and only if $l=a-1,\dots ,a-m$ except $l=0$ and $h^i(\PP^n,\Omega^b_{\PP^n}(s))=0$ for any $i$ if and only if $s=b-1,\dots ,b-n$ except $s=0$ so if $m>1$, since $h^i(h^i(\Omega^a(l-tk_1)\boxtimes\Omega^b(s-tk_2)))=\sum^{i_1+i_2=i}h^{i_1}(\PP^m,\Omega^a_{\PP^n}(l-tk_1))h^{i_2}(\PP^n,\Omega^b_{\PP^n}(s-tk_2))$, in order to have $\Omega^a(l)\boxtimes\Omega^b(s)$ acyclic for $m+n$ consecutive twist we must have $k_1=k_2=1$ or $m=k_1=2$ and $k_2=1$ or $m=n=2$ and $k_1=k_2=2$.\\
 Let $m>2$ and $k_1=k_2=1$. If $a=n$ $\Omega_{\PP^m}^a(l)$ is a line bundle $\Omega_{\PP^n}^b(s)$ must have $n$ consecutive acyclic twist so in must be also a line bundle. A similar argument can be used when $b=m$. Since these cases have been considered in Lemma \ref{ulb} we may assume $a=1,\dots ,m-1, b=1,\dots, n-1$.  Moreover we must have $l=a+1$ or $s=b+1$. Let assume $a=b+1$, In order to have $m+n$ acyclic twist we must have $h^{m}(\PP^m,\Omega^a_{\PP^m}(l-(m+n))=0$ so $l-m-n\geq a-m$, then $l\geq a+n$.  Since $h^a(\PP^m,\Omega^a_{\PP^m})\not=0$ we must have $h^n(\PP^n,\Omega^b_{\PP^n}(b+1-t))=0$, hence $b+1-t\geq b-n$ , then $l\leq n+1$. We may conclude that $n+1\leq l\leq a+n$ hence $a=1$ and $l=n+1$. Finally, since $h^b(\PP^n,\Omega^b_{\PP^n})\not=0$, we must have $h^0(\PP^m,\Omega^1_{\PP^m}(n+1-b-1))=0$, which implies $n-b\leq 1$ so $b\geq n-1$. We have obtained $\Omega^1(n+1)\boxtimes\Omega^{n-1}(n)$.\\
Similarly if we assume $l=a+1$ we obtain $\Omega^{m-1}(m)\boxtimes\Omega^1(m+1)$. So we have proved $(1)$.\\

If $n\leq 2$ by a simple case by case analysis we get $(2), (3)$.\\

If $n=1$ for any $k_2>0$, $\Oo_{\PP^1}(-1+k_2t)$ is acyclic for $t=0$ so when $m>1$ and $k_1>1$ there are not Ulrich bundle $\Omega^{a}(s)\boxtimes\Oo(l)$. When $k_1=1$ we must have $h^i(\PP^m,\Omega^b_{\PP^m}(s))=0$ for any $i$ and for $m+1$ consecutive twist except $s=-1$ so $\Omega^a_{\PP^m}(s)$ can be $\Omega^a(1)$ for any $a=1,\dots ,m$. So we get $(4)$. If $m=k_1=2$ we have the two cases described in $(5)$.\\

If $m=n=1$ by Lemma \ref{ulb} we obtain also $(6)$.

\end{proof}

\begin{remark} For  cohomological characterizations of $\Omega^a(l)\boxtimes\Omega^b(s)$ see \cite{MM}.
\end{remark}

\begin{proposition}\label{culb}
Let $\Vv$ be an indecomposable Ulrich bundle on $\Sigma$. Let $m>1$, then
\begin{itemize}

\item [(a)]  when $n>1$, if $\alpha_1^{0,2}=\alpha_1^{1,1}=0$ we must have  $k_2=1$ and $\Vv=\Oo(k_1-1,1)$,
\item [(b)] when $n=1$, if $h^1(\Vv\otimes\Oo(-2k_1,-1-k_2))=0$  we get $\Vv=\Oo(2k_1-1,k_2-1)$ and if $h^1(\Vv\otimes\Oo(-1-k_1,-2k_2))=0$  we get $\Vv=\Oo(k_1-1,2k_2-1)$.
\end{itemize}

\end{proposition}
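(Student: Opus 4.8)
The plan is to run the whole argument through the Beilinson resolution of Theorem~\ref{volon}. For part (a) I would apply that theorem to $\Vv(-h)$, obtaining the resolution~(\ref{res2}); since its degree-$2$ term $\oplus_{a+b=2}\Oo(-a,-b)^{\oplus\alpha_1^{a,b}}$ equals $\Oo(-2,0)^{\oplus\alpha_1^{2,0}}\oplus\Oo(-1,-1)^{\oplus\alpha_1^{1,1}}\oplus\Oo(0,-2)^{\oplus\alpha_1^{0,2}}$, the hypothesis $\alpha_1^{1,1}=\alpha_1^{0,2}=0$ deletes its last two summands. The core is then to propagate this: feeding the exact sequences~(\ref{a3g}), (\ref{a5g}), (\ref{a7g}), (\ref{a9g}) tensored by $\Vv(-h)$ into the long exact cohomology sequences, and using Remark~\ref{sunto} (so that $H^\bullet(\Vv(-h)\otimes\Gg^{a,b})$ is concentrated in degree $1$, of dimension $\alpha_1^{a,b}$) together with Proposition~\ref{riv2g} (to annihilate the auxiliary terms that occur), one shows inductively on $a+b$ that $\alpha_1^{a,b}=0$ for \emph{every} $(a,b)$ with $b\ge1$ and $a+b\ge2$. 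Then~(\ref{res2}) collapses to
$$0\to\Oo(-m,0)^{\oplus\alpha_1^{m,0}}\to\cdots\to\Oo(-2,0)^{\oplus\alpha_1^{2,0}}\to\Oo(-1,0)^{\oplus\alpha_1^{1,0}}\oplus\Oo(0,-1)^{\oplus\alpha_1^{0,1}}\to\Vv(-h)\to0.$$

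Since $\Hom(\Oo(-a,0),\Oo(0,-1))=0$ for $a\ge1$, this complex is the direct sum of the subcomplex formed by the $\Oo(-a,0)$ (whose differentials all lie in $H^0(\Oo(1,0))$, hence are pulled back from $\PP^m$) and the term $\Oo(0,-1)^{\oplus\alpha_1^{0,1}}$ mapping directly onto $\Vv(-h)$; so $\Vv(-h)\cong(\Ee\boxtimes\Oo_{\PP^n})\oplus\Oo(0,-1)^{\oplus\alpha_1^{0,1}}$ for a bundle $\Ee$ on $\PP^m$. As $\Vv$ is indecomposable, either $\Vv(-h)\cong\Oo(0,-1)$, i.e.\ $\Vv\cong\Oo(k_1,k_2-1)$, or $\Vv(-h)\cong\Ee\boxtimes\Oo_{\PP^n}$, i.e.\ $\Vv\cong\Ee(k_1)\boxtimes\Oo_{\PP^n}(k_2)$ with $\Ee$ indecomposable. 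A K\"unneth computation of $H^\bullet(\Vv(jh))$ on $\PP^m\times\PP^n$ for $j=-1,\dots,-d$, together with Lemma~\ref{ulb} and the assumption $m,n>1$, then rules out $\Oo(k_1,k_2-1)$ and forces $\Ee$ to be the line bundle $\Oo_{\PP^m}(-1)$ and $k_2=1$, giving $\Vv\cong\Oo(k_1-1,1)$.

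For part (b), $n=1$, so $\Omega^1_{\PP^1}(1)=\Oo_{\PP^1}(-1)$ and $\alpha_1^{a,1}=h^1(\Vv(-h)\otimes\Omega^a_{\PP^m}(a)\boxtimes\Oo_{\PP^1}(-1))$ is exactly the multiplicity of $\Oo(-a,-1)$ in~(\ref{res2}). First I would translate the hypothesis: using the pulled-back Koszul sequences~(\ref{a1g}), (\ref{a3g}), (\ref{a5g}) on the $\PP^m$-factor together with Propositions~\ref{riv} and~\ref{riv2g}, the vanishing $h^1(\Vv\otimes\Oo(-2k_1,-1-k_2))=0$ forces $\alpha_1^{a,1}=0$ for all $a$. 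With the bundles $\Oo(-a,-1)$ deleted, (\ref{res2}) collapses exactly as in part (a) to $\Vv(-h)\cong(\Ee\boxtimes\Oo_{\PP^1})\oplus\Oo(0,-1)^{\oplus\alpha_1^{0,1}}$; indecomposability again forces $\Vv$ to be a line bundle, and Lemma~\ref{ulb} together with the Ulrich condition (which for $n=1$, $m>1$ compels $k_1=1$) identifies it as $\Oo(2k_1-1,k_2-1)$. The second assertion is obtained in the same way, starting instead from the (twisted) Koszul resolution on the $\PP^1$-factor to convert $h^1(\Vv\otimes\Oo(-1-k_1,-2k_2))=0$ into the required vanishings, and ending at $\Oo(k_1-1,2k_2-1)$.

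The main obstacle is the propagation/collapse step: checking that the single (resp.\ two) cohomological hypotheses annihilate \emph{all} of the off-axis multiplicities $\alpha_1^{a,b}$ with $b\ge1$, $a+b\ge2$ appearing in~(\ref{res2}). This is a careful induction inside the long exact cohomology sequences attached to~(\ref{a3g})--(\ref{a9g}), in which, at each step, Proposition~\ref{riv2g} and Remark~\ref{sunto} must be invoked to clear every auxiliary term so that the hypothesis travels up the exceptional collection; the mixed bundles $\Gg^{a,b}$ with $a,b\ge1$ are where the bookkeeping is heaviest, and in case (b) there is the extra preliminary reduction expressing the $h^1$ in the hypothesis in terms of the $\alpha_1^{a,1}$.
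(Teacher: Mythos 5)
Your part (a) begins the same way as the paper's proof (use (\ref{res2}) and delete the summands $\Oo(-1,-1)^{\oplus\alpha_1^{1,1}}$ and $\Oo(0,-2)^{\oplus\alpha_1^{0,2}}$), but everything then hinges on the ``propagation'' claim that $\alpha_1^{a,b}=0$ for every $(a,b)$ with $b\ge 1$ and $a+b\ge 2$. You flag this as the main obstacle, and as described the induction does not close: the long exact sequences obtained from (\ref{a3g})--(\ref{a9g}) tensored with the \emph{fixed} twist $\Vv(-h)$ produce connecting terms of the shape $h^{1+k}(\Vv(-h)\otimes\cdots)$ together with multiples of $\alpha_1^{0,1}$ and $\alpha_1^{a,0}$. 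Proposition \ref{riv2g} only annihilates groups of the form $H^i(\Vv(-ih)\otimes\cdots)$ or $H^i(\Vv(-(i+1)h)\otimes\cdots)$, where the twist moves with the cohomological degree, so none of these auxiliary terms is covered, and $\alpha_1^{0,1}$, $\alpha_1^{a,0}$ are not hypothesized to vanish. So the centre of your argument is a genuine gap, not a bookkeeping exercise. The paper does not attempt any such propagation: it reads $\Vv(-h)=\Oo(-1,0)^{\oplus\alpha_1^{1,0}}\oplus\Oo(0,-1)^{\oplus\alpha_1^{0,1}}$ directly off the collapsed resolution (\ref{res2}) (admittedly very tersely) and then concludes with Lemma \ref{ulb}; your final K\"unneth step ``forcing $\Ee=\Oo_{\PP^m}(-1)$'' is likewise only asserted.

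Part (b) is where you genuinely diverge, and where the paper's key idea is missing from your plan. For $k_1>1$ the sheaf $\Oo(-k_1,-1)$ is not among the dual bundles $\Gg^{a,b}$, so the hypothesis $h^1(\Vv\otimes\Oo(-2k_1,-1-k_2))=h^1(\Vv(-h)\otimes\Oo(-k_1,-1))=0$ cannot be converted into vanishing of the $\alpha_1^{a,1}$ by the Koszul complexes; your proposed translation runs into exactly the same uncontrolled terms as in (a). The paper instead performs $k_1-1$ right mutations of the exceptional collection, replacing $\{\Oo(-1,-1),\Oo(0,-1)\}$ by $\{\Oo(k_1-2,-1),\Oo(k_1-1,-1)\}$, so that $h^1(\Vv(-h)\otimes\Oo(-k_1,-1))$ appears as one of the entries of the mutated Beilinson table; its vanishing makes that table split $\Vv(-h)$ as $\Oo(k_1-1,-1)^{\oplus b}\oplus\Oo(-1,0)^{\oplus c}$, and Lemma \ref{ulb} finishes. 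Without this mutation step your argument for (b) does not get started. (A caveat you could not have known: the paper's table in (b) is the four-term one of $\PP^1\times\PP^1$, so even the published argument needs adjustment to cover $m>1$.)
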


\begin{proof}
$(a)$ Let us consider the resolution (\ref{res2}). Since $\alpha_1^{0,2}=\alpha_1^{1,1}=0$ we get $\Vv(-h)=\Oo(-1,0)^{\oplus \alpha_1^{1,0}}\oplus\Oo(0,-1)^{\oplus \alpha_1^{0,1}}$. So by Lemma \ref{ulb} we obtain that $k_2=1$ and $\Vv(-h)=\Oo(-1,0)$.

$(b)$ after $k_1-1$ right mutations the full exceptional collection $\{\Oo(-1,-1),\Oo(0,-1),\Oo(-1,0),\Oo\}$ becomes $\{\Oo(k_1-2,-1),\Oo(k_1-1,-1),\Oo(-1,0),\Oo\}$. So we obtain the following Beilinson table for $\Vv(-h)$:
\begin{center}\begin{tabular}{|c|c|c|c|}
                \hline
                $\Oo(k_1-2,-1)$ & $\Oo(k_1-1,-1)$ &$\Oo(-1,0)$ &$\Oo$ \\
                \hline
                \hline
                $0$ & $0$ & $0$ & $0$ \\
                $a$ & $b$ & $0$ & $0$ \\
                $0$ & $0$ & $c$ & $0$ \\
                $0$ & $0$ & $0$ & $0$ \\
                \hline
                \hline
                $\Oo(-k_1,-1)$ & $\Oo(-k_1+1,-1)$ &$\Oo(-1,0)$ &$\Oo$ \\
                \hline
              \end{tabular}\end{center}
              where $a=h^1(\Vv(-h)\otimes\Oo(-k_1,-1))$, $b=h^1(\Vv(-h)\otimes\Oo(-k_1+1,-1))$, $c=h^1(\Vv(-h)\otimes\Oo(-1,0))$. Since $a=h^1(\Vv\otimes\Oo(-2k_1,-1-k_2))=0$ we obtain $\Vv(-h)=\Oo(k_1-1,-1)^{\oplus b}\oplus\Oo(-1,0)^{\oplus c}$. Finally by Lemma \ref{ulb} we may conclude that $\Vv=\Oo(2k_1-1,k_2-1)$. Similarly if $h^1(\Vv\otimes\Oo(-1-k_1,-2k_2))=0$  we get $\Vv=\Oo(k_1-1,2k_2-1)$.
\end{proof}

\begin{theorem}\label{vlb}
Let $\Vv$ be an indecomposable Ulrich bundle on $\Sigma$. Let $n>1$, $a=0,\dots, m$, $b=0,\dots , n$. Let $\alpha_{m}^{a,b}=0$ if $a+b=m-1$ or $a+b=m+1$. Them we must have $k_1=k_2=1$ and $\Vv=\Oo(0,m)$ if $m\not=n$ or $\Vv=\Oo(n,0), \Vv=\Oo(0,m)$ if $m=n$.

\end{theorem}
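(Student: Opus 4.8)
The plan is to feed $\Vv$ into Theorem~\ref{volon} with $q=m$. Since $n>1$ and $m\ge n$ we have $m\ge 2$, so $1<m<m+n-1=d-1$, and $\Vv(-mh)$ is the homology of the monad~(\ref{res5})
$$0\to\Bb_1\to\bigoplus_{a+b=m}\Oo(-a,-b)^{\oplus\alpha_m^{a,b}}\to\Bb_2\to 0,$$
with $\Bb_1,\Bb_2$ defined by the stated exact sequences. The first step is to collapse $\Bb_1$ and $\Bb_2$ using the hypothesis. In the defining sequence $0\to\Oo(-m,-n)^{\oplus\alpha_m^{m,n}}\to\cdots\to\bigoplus_{a+b=m+1}\Oo(-a,-b)^{\oplus\alpha_m^{a,b}}\to\Bb_1\to 0$ the term immediately preceding $\Bb_1$ is the diagonal $a+b=m+1$, which vanishes by hypothesis; hence $\Bb_1$, being a quotient of that term, is zero. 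Dually, in $0\to\Bb_2\to\bigoplus_{a+b=m-1}\Oo(-a,-b)^{\oplus\alpha_m^{a,b}}\to\cdots\to\Oo(-1,0)^{\oplus\alpha_m^{1,0}}\oplus\Oo(0,-1)^{\oplus\alpha_m^{0,1}}\to 0$ the term immediately following $\Bb_2$ is the diagonal $a+b=m-1$, again zero by hypothesis, so $\Bb_2$, being a subsheaf of it, is zero.

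With $\Bb_1=\Bb_2=0$ the monad degenerates and $\Vv(-mh)\cong\bigoplus_{a+b=m}\Oo(-a,-b)^{\oplus\alpha_m^{a,b}}$; twisting by $\Oo(mh)=\Oo(mk_1,mk_2)$ gives $\Vv\cong\bigoplus_{a+b=m}\Oo(mk_1-a,mk_2-b)^{\oplus\alpha_m^{a,b}}$. A direct sum of line bundles is indecomposable only when it is a single line bundle, so indecomposability of $\Vv$ forces a unique pair $(a_0,b_0)$ with $a_0+b_0=m$, $0\le a_0\le m$, $0\le b_0\le n$, $\alpha_m^{a_0,b_0}=1$, all other $\alpha_m^{a,b}$ on that diagonal vanishing, and $\Vv=\Oo(mk_1-a_0,mk_2-b_0)$.

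Finally I would apply Lemma~\ref{ulb}: since $m>1$ and $n>1$, the only way $\Sigma$ supports an Ulrich line bundle is $k_1=k_2=1$, and then $\Vv\in\{\Oo(n,0),\Oo(0,m)\}$. Matching $\Oo(m-a_0,m-b_0)$ with these options, $\Oo(0,m)$ corresponds to $(a_0,b_0)=(m,0)$, which always meets the range constraints, while $\Oo(n,0)$ corresponds to $(a_0,b_0)=(m-n,m)$, which satisfies $a_0+b_0=m$ and $b_0\le n$ precisely when $m=n$. Hence $\Vv=\Oo(0,m)$ if $m\ne n$ and $\Vv\in\{\Oo(n,0),\Oo(0,m)\}$ if $m=n$, as claimed. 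The one delicate point is the degeneration of the monad: one must observe that killing just the two diagonals $a+b=m\pm 1$, rather than the whole tails $a+b\ge m+1$ and $a+b\le m-1$, already suffices to annihilate $\Bb_1$ and $\Bb_2$, because those two diagonals are exactly the terms of the defining complexes adjacent to $\Bb_1$ and $\Bb_2$.
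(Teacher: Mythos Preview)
Your proof is correct and follows essentially the same route as the paper: apply the monad~(\ref{res5}) with $q=m$, use the hypothesis on the diagonals $a+b=m\pm1$ to force $\Bb_1=\Bb_2=0$, conclude that $\Vv(-mh)$ splits as a sum of line bundles, and then invoke Lemma~\ref{ulb} together with indecomposability. Your final paragraph, matching the possible Ulrich line bundles against the range constraints $0\le a_0\le m$, $0\le b_0\le n$, makes explicit a step the paper leaves implicit; and your identification of which diagonal kills which $\Bb_i$ is in fact the correct one (the paper's proof has them interchanged, though this is harmless since both vanishings are assumed).
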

\begin{proof}

Let us consider the monad (\ref{res5}) for $q=m$:
$$0\to\Bb_1\to\oplus_{a+b=m}\Oo(-a,-b)^{\oplus \alpha_{q}^{a,b}}\to\Bb_2\to 0.$$
Since $\alpha_{m}^{a,b}=0$ if $a+b=m-1$ we get $\Bb_1=0$ and since $\alpha_{m}^{a,b}=0$ if  $a+b=m+1$ we get $\Bb_2=0$. So
$$\Vv(-mh)=\oplus_{a+b=m}\Oo(-a,-b)^{\oplus \alpha_{q}^{a,b}}$$ and, since $\Vv$ is indecomposable, by Lemma \ref{ulb} we get the claim.

\end{proof}

\begin{proposition}\label{culq}
Let $\Vv$ be an Ulrich bundle on $\Sigma$. Let $k_2=1$,  if $\alpha_n^{a,b}=0$ when $a+b=n+1$ or $a>1$ and $a+b=n$ we get $\Vv=\Ee(nk_1)\boxtimes\Oo$, where $\Ee$ be an Ulrich bundle on $(\PP^m,\Oo_{\PP^m}(k_1))$.

\end{proposition}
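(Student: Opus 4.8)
The plan is to run the Beilinson monad (\ref{res5}) of Theorem~\ref{volon} at the shift $q=n$,
$$0\to\Bb_1\to\oplus_{a+b=n}\Oo(-a,-b)^{\oplus\alpha_n^{a,b}}\to\Bb_2\to 0$$
(for $n=1$ one uses instead the resolution (\ref{res2}) when $m>1$, or the monad (\ref{res4}) when $m=n=1$), and to extract from it that $\Vv$, up to the twist $\Oo(-nk_1,0)$, is a pull‑back from the first factor. Indeed $(\Ee(nk_1)\boxtimes\Oo)(-nh)=\Ee\boxtimes\Oo_{\PP^n}(-n)$, so it is equivalent to show that $\Vv\otimes\Oo(-nk_1,0)=\Vv(-nh)(0,n)$ is pulled back along $\pi_1\colon\Sigma\to\PP^m$, i.e. that its restriction to every fibre $\{x\}\times\PP^n$ is a trivial bundle; granting this, $\Ee:=\pi_{1*}\bigl(\Vv\otimes\Oo(-nk_1,0)\bigr)$ is locally free, $\pi_1^*\Ee\cong\Vv\otimes\Oo(-nk_1,0)$, and $\Vv=\Ee(nk_1)\boxtimes\Oo$.

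First I use the hypothesis $\alpha_n^{a,b}=0$ for $a+b=n+1$: in the sequence defining $\Bb_1$ the term $\oplus_{a+b=n+1}\Oo(-a,-b)^{\oplus\alpha_n^{a,b}}$ surjects onto $\Bb_1$, so $\Bb_1=0$ and the monad degenerates to a short exact sequence $0\to\Vv(-nh)\to M\to\Bb_2\to 0$ with $M=\oplus_{a+b=n}\Oo(-a,-b)^{\oplus\alpha_n^{a,b}}$. Then $\alpha_n^{a,b}=0$ for $a>1$ and $a+b=n$ reduces $M$ to $\Oo(0,-n)^{\oplus\alpha_n^{0,n}}\oplus\Oo(-1,-(n-1))^{\oplus\alpha_n^{1,n-1}}$. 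Moreover $\Hom(\Oo(-a,-b),\Oo(-a',-b'))=0$ whenever $a'>a$, since the first factor contributes $H^0(\PP^m,\Oo_{\PP^m}(a-a'))=0$; hence the differential $M\to\oplus_{a+b=n-1}\Oo(-a,-b)^{\oplus\alpha_n^{a,b}}$, and every later differential of the Beilinson complex, cannot raise $a$, so its image $\Bb_2$ is a subsheaf of $\Oo(0,-(n-1))^{\oplus\alpha_n^{0,n-1}}\oplus\Oo(-1,-(n-2))^{\oplus\alpha_n^{1,n-2}}$.

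The heart of the proof is to deduce the fibrewise triviality from this. Tensoring the sequence by $\Oo(0,n)$ gives
$$0\to\Vv\otimes\Oo(-nk_1,0)\to\Oo^{\oplus\alpha_n^{0,n}}\oplus\Oo(-1,1)^{\oplus\alpha_n^{1,n-1}}\to\Bb_2(0,n)\to 0;$$
restricting to a fibre $F=\{x\}\times\PP^n$ one has $M(0,n)|_F=\Oo_{\PP^n}^{\oplus\alpha_n^{0,n}}\oplus\Oo_{\PP^n}(1)^{\oplus\alpha_n^{1,n-1}}$ while $\Bb_2(0,n)|_F$ maps into $\Oo_{\PP^n}(1)^{\oplus\alpha_n^{0,n-1}}\oplus\Oo_{\PP^n}(2)^{\oplus\alpha_n^{1,n-2}}$, and since each block of the differentials is, on $\Sigma$, of the form $\mathrm{id}_{\PP^m}\boxtimes(\text{map on }\PP^n)$ or $(\text{map on }\PP^m)\boxtimes\mathrm{id}_{\PP^n}$, the $\PP^m$‑blocks become isomorphisms after restriction; a diagram chase then carries the $\Oo_{\PP^n}(1)$‑summand isomorphically into $\Bb_2(0,n)|_F$ and pins $\Vv|_F$ inside $\Oo_{\PP^n}^{\oplus\alpha_n^{0,n}}$, from which one concludes $\Vv|_F\cong\Oo_{\PP^n}^{\,\mathrm{rk}\,\Vv}$ — e.g. by comparing with the cohomology of the twists $\Vv\otimes\Oo(jk_1,-t)$ on $\Sigma$ (known via Leray and Proposition~\ref{riv}) through an upper‑semicontinuity/rigidity argument. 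The main obstacle is exactly to make this restriction exact and uniform in $x$: one must control $\Bb_2$ under base change — equivalently, show that $\Vv(-nh)\hookrightarrow M$ has locally free cokernel — using that $\Bb_2$ has projective dimension $\le 1$ (the degenerate monad) together with the finite locally free coresolution coming from the tail of the Beilinson complex, and then run the family argument over all of $\PP^m$ rather than at a general point only.

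For the last step, write $\Vv(-th)=\Ee((n-t)k_1)\boxtimes\Oo_{\PP^n}(-t)$; by the Künneth formula $H^\bullet(\Vv(-th))=0$ is equivalent to the vanishing of $H^\bullet(\PP^m,\Ee((n-t)k_1))$ or of $H^\bullet(\PP^n,\Oo_{\PP^n}(-t))$. For $t=1,\dots,n$ the second group vanishes, so those twists are automatically acyclic; for $t=n+1,\dots,n+m$ it does not (being concentrated in degree $n$), so the acyclicity of $\Vv(-th)$ — which holds because $\Vv$ is Ulrich on $\Sigma$ — is equivalent to $H^\bullet(\PP^m,\Ee(-sk_1))=0$ for $s=t-n=1,\dots,m$, i.e. to $\Ee$ being Ulrich for $(\PP^m,\Oo_{\PP^m}(k_1))$. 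This also recovers the converse implication recorded in the Example at the start of this section.
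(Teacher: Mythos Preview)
There is a genuine gap, and you flag it yourself: the fibrewise triviality of $\Vv(-nk_1,0)$ on each $\{x\}\times\PP^n$ is asserted via a ``diagram chase'' and an ``upper-semicontinuity/rigidity argument'' that are never actually carried out, and you name controlling $\Bb_2$ under base change as ``the main obstacle'' without resolving it. As written, nothing in your argument establishes that $\Vv$ is a pullback from $\PP^m$; the proposal is a plan, not a proof.

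The paper avoids fibres entirely. Its proof (modulo typos: the roles of $\Bb_1,\Bb_2$ are swapped relative to Theorem~\ref{volon}, and the intended vanishing appears to be along $a+b=n-1$ rather than $n+1$, with $a\ge 1$ rather than $a>1$) kills the \emph{right}-hand side of the monad, so that $\Vv(-nh)$ sits as a quotient with a left resolution ending in $\Oo(0,-n)^{\oplus\alpha_n^{0,n}}\to\Vv(-nh)\to 0$. The decisive step is then the elementary Hom-vanishing $\Hom\bigl(\Oo(-a,-b),\Oo(0,-n)\bigr)=0$ for $b<n$: this forces the resolution to involve only line bundles $\Oo(-a,-n)=\Oo_{\PP^m}(-a)\boxtimes\Oo_{\PP^n}(-n)$, so $\Vv(-nh)\otimes\Oo(0,n)$ is presented as the cokernel of a map between pullbacks from $\PP^m$ and is therefore itself a pullback $\Ee\boxtimes\Oo_{\PP^n}$. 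No restriction to fibres, no base change, no semicontinuity. Reading the hypothesis literally as $a+b=n+1$ puts you on the side of the monad where this clean cascade is unavailable, which is why your approach becomes hard. Your closing K\"unneth verification that $\Ee$ is Ulrich on $(\PP^m,\Oo_{\PP^m}(k_1))$ is correct and in fact fills in a step the paper leaves implicit.
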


\begin{proof}
$(a)$ Let us consider the monad for $\Vv(-nh)$ (\ref{res5}):
$$0\to\Bb_1\to\oplus_{a+b=n}\Oo(-a,-b)^{\oplus \alpha_{n}^{a,b}}\to\Bb_2\to 0$$
where $\Bb_1$ is defined by the exact sequence
$$0\to\Oo(-m,-n)^{\oplus \alpha_{n}^{m,n}}\to\dots\to \oplus_{a+b=n-1}\Oo(-a,-b)^{\oplus \alpha_{n}^{a,b}}\to\Bb_1\to 0$$
and $\Bb_2$ is defined by the exact sequence
$$0\to\Bb_2\to \oplus_{a+b=n+1}\Oo(-a,-b)^{\oplus \alpha_{n}^{a,b}}\to\dots\to\to\Oo(-1,0)^{\oplus \alpha_{n}^{1,0}}\oplus\Oo(0,-1)^{\oplus \alpha_{n}^{0,1}}\to 0.$$ Since $\alpha_n^{a,b}=0$ if $a+b=n+1$ we get $\Bb_2=0$.\\
Since $\alpha_n^{a,b}=0$ if $a>1$ and $a+b=n$ we get

$$0\to\Oo(-m,-n)^{\oplus \alpha_{n}^{m,n}}\to\dots\to \oplus_{a+b=n-1}\Oo(-a,-b)^{\oplus \alpha_{n}^{a,b}}\to \Oo(0,-n)^{\oplus \alpha_{n}^{0,n}}\to\Vv(-nh)\to 0.$$
Now, since when $b<n$ there no maps from $\Oo(-a,-b)$ to $\Oo(0,-n)$, we must have $ \alpha_{n}^{a,b}=0$ when $b<n$ and $a+b=n+1,\dots ,m+n-1$, so we obtain

$$0\to\Oo(-m,-n)^{\oplus \alpha_{n}^{m,n}}\to\dots\to \Oo(-1,-n)^{\oplus \alpha_{n}^{1,n}}\to \Oo(0,-n)^{\oplus \alpha_{n}^{0,n}}\to\Vv(-nh)\to 0$$
and we may conclude that $\Vv=\Ee(nk_1)\boxtimes\Oo$, where $\Ee$ is an Ulrich bundle on $(\PP^m,\Oo_{\PP^m}(k_1))$.
\end{proof}

\section{The general case}\label{sec5}

Theorem \ref{volon} can be generalized to the general case as follow:

\begin{theorem}\label{volong}
Let $\Vv$ be an Ulrich bundle on $\Sigma$. Let $a_i=0,\dots, n_i$. Then  for $0\leq q\leq d$ $\Vv(-qh)$ is the homology of the following monad

\begin{equation}\label{res5g}
0\to\Bb_1\to\oplus_{a_1+\dots +a_s=q}\Oo(-a_1,\dots ,-a_s)^{\oplus \alpha_{q}^{a_1,\dots ,a_s}}\to\Bb_2\to 0\end{equation}
where $\Bb_1$ is defined by the exact sequence
$$0\to\Oo(-n_1,\dots ,-n_s)^{\oplus \alpha_{q}^{n_1,\dots ,n_s}}\to\dots\to \oplus_{a_1+\dots +a_s=q+1}\Oo(-a_1,\dots ,-a_s)^{\oplus \alpha_{q}^{a_1,\dots ,a_s}}\to\Bb_1\to 0$$
and $\Bb_2$ is defined by the exact sequence
$$0\to\Bb_2\to \oplus_{a_1+\dots +a_s=q-1}\Oo(-a_1,\dots ,-a_s)^{\oplus \alpha_{q}^{a_1+\dots +a_s}}\to\dots\to\oplus_{a_1+\dots +a_s=1}\Oo(-a_1,\dots ,-a_s)^{\oplus \alpha_{q}^{a_1+\dots +a_s}}\to 0$$

\end{theorem}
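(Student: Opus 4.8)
The plan is to transplant the proof of Theorem~\ref{volon} from two factors to $s$ factors; the only cohomological input is Remark~\ref{suntog}, which is already available, and the only new work is the set-up of the exceptional collection on $X=\PP^{n_1}\times\dots\times\PP^{n_s}$. Write $\vec a=(a_1,\dots,a_s)$, $|\vec a|=a_1+\dots+a_s$, and $\Gg^{\vec a}:=\Omega^{a_1}_{\PP^{n_1}}(a_1)\boxtimes\dots\boxtimes\Omega^{a_s}_{\PP^{n_s}}(a_s)$. On each $\PP^{n_r}$ take the standard full exceptional collection $\{\Oo_{\PP^{n_r}}(-n_r),\dots,\Oo_{\PP^{n_r}}(-1),\Oo_{\PP^{n_r}}\}$, with right dual the Beilinson collection $\{\Oo_{\PP^{n_r}},\Omega^1_{\PP^{n_r}}(1),\dots,\Omega^{n_r}_{\PP^{n_r}}(n_r)\}$. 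Applying Orlov's theorem (\cite{Orlov}) $s-1$ times, exactly as in (\ref{col})--(\ref{cold}), one obtains on $X$ a full exceptional collection $\langle\Ee_0[k_0],\dots,\Ee_M[k_M]\rangle$, $M=(n_1+1)\cdots(n_s+1)-1$, whose underlying bundles are the line bundles $\Oo(-a_1,\dots,-a_s)$ with $0\le a_r\le n_r$, listed in order of nondecreasing $|\vec a|$ with shift $k_j=|\vec a_j|-j$; the associated dual collection $\langle F_M=\Ff_M,\dots,F_0=\Ff_0\rangle$ of Theorem~\ref{use} is the box product of the factorwise Beilinson collections, i.e.\ $\Ff_j=\Gg^{\vec a_j}$ for the multi-index indexing $\Ee_j$. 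The orthogonality relations (\ref{order}) reduce by the Künneth formula to the fact that, for $0\le a,b\le n$, $H^\bullet(\PP^n,\Omega^b(b-a))\ne 0$ exactly when $a=b$, in which case it equals $\CC$ concentrated in degree $b$ (at the extremes one uses $\Omega^{n_r}_{\PP^{n_r}}(n_r)=\Oo_{\PP^{n_r}}(-1)$); and since $\Ext^k(\Ee_i,\Ee_j)=\Ext^k(\Ff_i,\Ff_j)=0$ for $k>0$, both families are genuine exceptional collections, so the strong form of Beilinson's theorem (Remark~\ref{remb}) applies, with the line-bundle collection used as the expanding collection and the box-dual collection appearing inside the cohomology.

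Next I would feed $A=\Vv(-qh)$, $0\le q\le d$, into this machinery, exactly as in the Beilinson tables of the proof of Theorem~\ref{volon}. Equivalently, one may build $L^\bullet$ from the Beilinson resolution of the diagonal $\Oo_\Delta\subset X\times X$: this resolution is the box product of the ones on the factors, hence has $\Oo(-\vec a)\boxtimes\Gg^{\vec a}$ in cohomological degree $-|\vec a|$, and integrating the second copy of $X$ against $\Vv(-qh)$ replaces it by $\Oo(-\vec a)\otimes R\Gamma\bigl(\Vv(-qh)\otimes\Gg^{\vec a}\bigr)$, placed in degree $-|\vec a|+(\text{cohomological degree of }R\Gamma)$. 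By Remark~\ref{suntog}, $R\Gamma(\Vv(-qh)\otimes\Gg^{\vec a})$ is concentrated in degree $q$, of dimension $\alpha_q^{\vec a}$, so each column carries at most one entry and the complex $L^\bullet$ quasi-isomorphic to $\Vv(-qh)$ is
\[
L^{\,q-\ell}=\bigoplus_{a_1+\dots+a_s=\ell}\Oo(-a_1,\dots,-a_s)^{\oplus\alpha_q^{\vec a}}\qquad(\ell=0,\dots,d),
\]
concentrated in cohomological degrees $q-d,\dots,q$, with $H^0(L^\bullet)=\Vv(-qh)$ and $H^\ell(L^\bullet)=0$ for $\ell\ne 0$.

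It then remains to read off the monad. Put $\Bb_1:=\Image(L^{-1}\to L^0)$ and $\Bb_2:=\Image(L^0\to L^1)$. Since $L^\bullet$ is exact away from degree $0$, splicing its strictly negative part gives the exact sequence $0\to L^{q-d}\to\dots\to L^{-1}\to\Bb_1\to 0$ that defines $\Bb_1$, splicing its strictly positive part gives $0\to\Bb_2\to L^1\to\dots\to L^q\to 0$ that defines $\Bb_2$, and the complex $\Bb_1\to L^0\to\Bb_2$ has homology $\Vv(-qh)$ at the middle term $\bigoplus_{|\vec a|=q}\Oo(-\vec a)^{\oplus\alpha_q^{\vec a}}$; this is precisely the monad (\ref{res5g}). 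The end-cases are automatic: for $q\ge 1$ the term $L^q=\Oo^{\oplus\alpha_q^{0,\dots,0}}$ vanishes since $\alpha_q^{0,\dots,0}=h^q(\Vv(-qh))=0$ by the Ulrich property, so the $\Bb_2$-resolution stops at $|\vec a|=1$ as written; and for $q=0$ (resp.\ $q=d$) the $\Bb_2$-part (resp.\ the $\Bb_1$-part) is empty, recovering the resolutions and monad of Theorem~\ref{volon}.

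The real difficulty is concentrated in the first step: checking that $s-1$ iterations of Orlov's construction genuinely yield the line-bundle collection $\{\Oo(-\vec a)\}$ with box-dual $\{\Gg^{\vec a}\}$ and shifts $k_j=|\vec a_j|-j$, so that the term $\alpha_q^{\vec a}\,\Oo(-\vec a)$ lands in cohomological degree $q-|\vec a|$ of $L^\bullet$. Once this bookkeeping is in place, Remark~\ref{suntog} collapses each column of the Beilinson table to a single entry and the monad shape (\ref{res5g}) is forced, just as in the proof of Theorem~\ref{volon}.
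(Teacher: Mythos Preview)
Your proposal is correct and follows essentially the same approach as the paper: both build the full exceptional collection on $X$ from the factorwise line-bundle collections via Orlov's theorem, identify the dual collection with the box products $\Gg^{\vec a}=\Omega^{a_1}(a_1)\boxtimes\dots\boxtimes\Omega^{a_s}(a_s)$, apply the Beilinson spectral sequence to $\Vv(-qh)$, and invoke Remark~\ref{suntog} to collapse each column to a single entry, reducing to the argument of Theorem~\ref{volon}. Your write-up is more explicit than the paper's (you give the shift formula $k_j=|\vec a_j|-j$, an alternative description via the diagonal resolution, and a clean extraction of $\Bb_1,\Bb_2$ from $L^\bullet$), but the underlying strategy is identical.
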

\begin{proof}

 Let us consider on every $\PP^{n_i}$ the full exceptional collection $$\{\Oo_{\PP^{n_i}}(-n_i),\dots , \Oo_{\PP^{n_i}}(-1), \Oo_{\PP^{n_i}}\}.$$
  We may obtain (see \cite{Orlov}) a full exceptional collection $\langle E_d \ldots, E_0\rangle$ given by bundles $\Oo(j_1, \dots, j_s)$ $(0\leq j_i\leq n_i)$ with suitable shift which is the generalization of (\ref{col}).

The associated  exceptional collection $\langle F_d, \ldots, F_0\rangle$ of Theorem \ref{use}, which is the generalization of (\ref{cold}), is given by bundles $\Gg^{a_1,\dots, a_s}=\Omega^{a_1}(a_1)\boxtimes\dots\boxtimes\Omega^{a_s}(a_s)$.

 We construct a Beilinson complex, quasi-isomorphic to $\Vv(-qh)$, by calculating $H^{i+k_j}(\Vv\otimes \Ff_j)\otimes \Ee_j$ with  $i,j \in \{0, \ldots, d\}$. By Remark \ref{suntog} in every column of the table at most one element is different to zero. So, arguing as in Theorem \ref{volon} we get   the  monad (\ref{res5g}).
\end{proof}
\begin{remark}For $q=0$, $q=1$, $q=d$ we obtain the resolutions. The resolutions and monads are all different from each other for any $q=0,\dots, d$.
\begin{enumerate}

\item From  (\ref{res5g}) we deduce that for any $q=0,\dots,d$ it is not possible to have $\alpha_q^{a_1,\dots, a_s}=0$ for every $a_1,\dots ,a_s$ such that  $0\leq a_i\leq n_i$ for every index $i$ and $a_1+\dots+a_s=q$.
    \item If $\Sigma$ is a Segre Variety namely if $k_1=\dots =k_s=1$ we get that $$\alpha_{i}^{n_1,\dots ,n_s}=h^i(\Vv((-i-1)h))=0$$ for any $i<d$ so the bundle $\Bb_1$ in (\ref{res5g}) can be simplified:
        $$0\to\oplus_{a_1+\dots +a_s=d-1}\Oo(-a_1,\dots ,-a_s)^{\oplus \alpha_{q}^{a_1,\dots ,a_s}}\to\dots\to \oplus_{a_1+\dots +a_s=q+1}\Oo(-a_1,\dots ,-a_s)^{\oplus \alpha_{q}^{a_1,\dots ,a_s}}\to\Bb_1\to 0.$$
        \item The results of section \ref{sec4} can be generalized on arbitrary Segre Veronese varieties.

    \end{enumerate}
\end{remark}


\providecommand{\bysame}{\leavevmode\hbox to3em{\hrulefill}\thinspace}
\providecommand{\MR}{\relax\ifhmode\unskip\space\fi MR }
\providecommand{\MRhref}[2]{%
  \href{http://www.ams.org/mathscinet-getitem?mr=#1}{#2}
}
\providecommand{\href}[2]{#2}

\end{document}